\newtheorem{lem}{Lemma}[section]
\newtheorem{cor}[lem]{Corollary}
\newtheorem{thm}[lem]{Theorem}
  \newtheorem{exama}[section]{Example}
  \newtheorem{conja}{Conjecture}[section]
\numberwithin{equation}{section}
\numberwithin{table}{section}
\renewcommand\phi{\varphi}              
\newcommand\Eta{{\mathrm H}}
\newcommand \rk{\operatorname{rank}}
\begin{document}
\title{Spectral Properties of Complex Unit Gain Graphs}

\author{Nathan Reff}
\address{Department of Mathematical Sciences\\Binghamton University (SUNY)\\ Binghamton, NY 13902-6000, U.S.A.}
\email{reff@math.binghamton.edu}

\subjclass[2000]{Primary 05C50; Secondary 05C22, 05C25}

\keywords{Gain graph, Laplacian eigenvalues, Laplacian matrix, adjacency eigenvalues, incidence matrix, signless Laplacian}

\begin{abstract}
A complex unit gain graph is a graph where each orientation of an edge is given a complex unit, which is the inverse of the complex unit assigned to the opposite orientation.  We extend some fundamental concepts from spectral graph theory to complex unit gain graphs.  We define the adjacency, incidence and Laplacian matrices, and study each of them.  The main results of the paper are eigenvalue bounds for the adjacency and Laplacian matrices.  
\end{abstract}

\date{\today}
\maketitle
\section*{Introduction}

The study of matrices and eigenvalues associated to graphs has developed over the past few decades.  Researchers have extensively studied the adjacency, Laplacian, normalized Laplacian and signless Laplacian matrices of a graph.  Recently there has been a growing study of matrices associated to a signed graph \cite{MR1950410,MR2156977,MR2477235,MR2527006,MITTOSSG,Germina20112432}.  In this paper we hope to set the foundation for a study of eigenvalues associated to a complex unit gain graph.

The \emph{circle group}, denoted $\mathbb{T}$, is the multiplicative group of all complex numbers with absolute value 1.  In other words, $\mathbb{T}=\{ z\in\mathbb{C} : |z| =1\}$ thought of as a subgroup of $\mathbb{C}^{\times}$, the multiplicative group of all nonzero complex numbers. 

A \emph{$\mathbb{T}$-gain graph} (or \emph{complex unit gain graph}) is a graph with the additional structure that each orientation of an edge is given a complex unit, called a gain, which is the inverse of the complex unit assigned to the opposite orientation.  We define $\vec{E}(\Gamma)$ to be the set of oriented edges, so this set contains two copies of each edge with opposite directions.  We write $e_{ij}$ for the oriented edge from $v_i$ to $v_j$.  Formally, a $\mathbb{T}$-gain graph is a triple $\Phi=(\Gamma,\mathbb{T},\phi)$ consisting of an \emph{underlying graph} $\Gamma=(V,E)$, the circle group $\mathbb{T}$ and a function $\phi :\vec{E}(\Gamma) \rightarrow \mathbb{T}$ (called the \emph{gain function}), such that $\phi(e_{ij})=\phi(e_{ji})^{-1}$.  For brevity, we write $\Phi=(\Gamma,\phi)$ for a $\mathbb{T}$-gain graph. 

The \emph{adjacency matrix} $A(\Phi)=(a_{ij})\in \mathbb{C}^{n\times n}$ is defined by
\begin{equation*}
a_{ij}=
\begin{cases} \phi(e_{ij}) & \text{if }v_i\text{ is adjacent to }v_j,
\\
0 &\text{otherwise.}
\end{cases}
\end{equation*}
If $v_i$ is adjacent to $v_j$, then $a_{ij}=\phi(e_{ij})=\phi(e_{ji})^{-1}=\overline{\phi(e_{ji})}=\bar{a}_{ji}$.  Therefore, $A(\Phi)$ is Hermitian and its eigenvalues are real.  The \emph{Laplacian matrix} $L(\Phi)$ (\emph{Kirchhoff matrix} or \emph{admittance matrix}) is defined as $D(\Gamma)-A(\Phi)$, where $D(\Gamma)$ is the diagonal matrix of the degrees of vertices of $\Gamma$.  Therefore, $L(\Phi)$ is also Hermitian.

We study both the adjacency and Laplacian matrices of a $\mathbb{T}$-gain graph.  The eigenvalues associated to both of these matrices are also studied.  We obtain eigenvalue bounds that depend on structural parameters of a $\mathbb{T}$-gain graph.  

A consequence of studying complex unit gain graphs is that signed and unsigned graphs can be viewed as specializations.  A \emph{signed graph} is a $\mathbb{T}$-gain graph where only $+1$ and $-1$ gains are used.  An unsigned graph can be thought of as a $\mathbb{T}$-gain graph where $+1$ is the only gain used.  Therefore, any result for the adjacency or Laplacian matrix of a $\mathbb{T}$-gain graph implies that the same result holds for signed and unsigned graphs.  Restricting the graph to have specified gains can produce other familiar matrices in the literature as well.  For instance, if we write $(\Gamma,-1)$ for the gain graph with all edges assigned a gain of $-1$.  The \emph{signless Laplacian} (\emph{quasi-Laplacian} or \emph{co-Laplacian}) matrix of an unsigned graph $\Gamma$ is $Q(\Gamma):=D(\Gamma)+A(\Gamma)=L(\Gamma,-1)$.      

\section{Background}
We will always assume that $\Gamma$ is simple.  The set of vertices is $V:=\{v_1,v_2,\ldots,v_n\}$.  Edges in $E$ are denoted by $e_{ij}=v_i v_j$.  We define $n:=|V|$ and $m:=|E|$.  The degree of a vertex $v_j$ is denoted by $d_j=\mathrm{deg}(v_j)$.  The maximum degree is denoted by $\Delta$. The set of vertices adjacent to a vertex $v$ is denoted by $N(v)$.  The \emph{average 2-degree} is $m_j:=\sum_{v_i\in N(v_j)} d_i/d_j$.  The \emph{$\mathfrak{g}$-degree} is $d_j^{\mathfrak{g}}:=|\{v_k\in N(v_j){:}\phi(e_{jk})=\mathfrak{g}\}|$.  The \emph{set of used gains} is $\mathfrak{W}:=\text{Image}(\phi)$.  All gain graphs considered in this paper will be finite, and therefore, $\mathfrak{W}$ will always be finite.   Now we can write $d_j=\sum_{\mathfrak{g}\in\mathfrak{W}} d_j^{\mathfrak{g}}$.  The \emph{net degree} is $d_j^{\text{net}}:=\sum_{\mathfrak{g}\in\mathfrak{W}} \mathfrak{g}d_j^{\mathfrak{g}}$.  We define several different types of degree vectors that will be used as follows: $\mathbf{d}:=(d_1,\ldots,d_n)$, $\mathbf{d}^{(k)}:=(d_1^k,\ldots,d_n^k)$ and $\mathbf{d}^{\text{net}}:=(d_1^{\text{net}},\ldots,d_n^{\text{net}})$.   We define $\mathbf{j}:=(1,\ldots,1)\in\mathbb{C}^n$. 

The \emph{gain of a walk} $W=e_{12}e_{23}\cdots e_{(l-1)l}$ is $\phi(W)=\phi(e_{12})\phi(e_{23})\cdots\phi(e_{(l-1)l})$.  A walk $W$ is \emph{neutral} if $\phi(W)=1$.  An edge set $S\subseteq E$ is \emph{balanced} if every cycle $C\subseteq S$ is neutral.  A subgraph is \emph{balanced} if its edge set is balanced.  We write $b(\Phi)$ for the number of connected components of $\Phi$ that are balanced. 

A \emph{switching function} is any function $\zeta:V\rightarrow \mathbb{T}$.  Switching the $\mathbb{T}$-gain graph $\Phi=(\Gamma,\phi)$ means replacing $\phi$ by $\phi^{\zeta}$, defined by: $\phi^{\zeta}(e_{ij})=\zeta(v_i)^{-1} \phi(e_{ij}) \zeta(v_j)$; producing the $\mathbb{T}$-gain graph $\Phi^{\zeta}=(\Gamma,\phi^{\zeta})$.  We say $\Phi_1$ and $\Phi_2$ are \emph{switching equivalent}, written $\Phi_1 \sim \Phi_2$, when there exists a switching function $\zeta$, such that $\Phi_2=\Phi_1^{\zeta}$.  Switching equivalence forms an equivalence relation on gain functions for a fixed underlying graph.  An equivalence class under this equivalence relation is called a \emph{switching class} of $\phi$.

A \emph{potential function} for $\phi$ is a function $\theta:V\rightarrow \mathbb{T}$, such that for every $e_{ij}\in \vec{E}(\Gamma)$, $\theta(v_i)^{-1}\theta(v_j) =\phi(e_{ij})$.
We write $(\Gamma,1)$ for the $\mathbb{T}$-gain graph with all neutral edges.

\begin{lem}[\cite{MR1007712,Math581Notes}]\label{HBGen} Let $\Phi=(\Gamma,\phi)$ be a $\mathbb{T}$-gain graph.  Then the following are equivalent:
\begin{enumerate}
\item $\Phi$ is balanced.
\item $\Phi\sim (\Gamma,1)$.
\item $\phi$ has a potential function.
\end{enumerate}
\end{lem}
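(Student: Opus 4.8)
The three conditions split into two purely algebraic equivalences plus one genuinely combinatorial step, so the plan is to establish the cycle $(1)\Rightarrow(3)\Rightarrow(2)\Rightarrow(1)$. Two of these links are routine. For $(3)\Rightarrow(2)$ I would set $\zeta=\theta^{-1}$ and compute $\phi^{\zeta}(e_{ij})=\zeta(v_i)^{-1}\phi(e_{ij})\zeta(v_j)=\theta(v_i)\phi(e_{ij})\theta(v_j)^{-1}=1$, using the defining property of a potential function; this exhibits $\Phi^{\zeta}=(\Gamma,1)$. For $(2)\Rightarrow(1)$ the key observation is that the gain of a closed walk is a switching invariant: in $\phi^{\zeta}(C)=\prod_k \zeta(v_{i_k})^{-1}\phi(e_{i_k i_{k+1}})\zeta(v_{i_{k+1}})$ the switching factors telescope around the cycle and cancel, so $\phi^{\zeta}(C)=\phi(C)$. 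Since every cycle of $(\Gamma,1)$ is neutral, every cycle of $\Phi$ is neutral, i.e. $\Phi$ is balanced. (The same telescoping, applied to $\phi(C)=\prod_k\theta(v_{i_k})^{-1}\theta(v_{i_{k+1}})$, also gives the direct route $(3)\Rightarrow(1)$ should a shorter chain be preferred.)

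The substantive step is $(1)\Rightarrow(3)$, the construction of a potential function from balance, and this is where the real work lies. Since both balance and the potential property are checked component by component, I would first reduce to the case that $\Gamma$ is connected. The plan is then to choose a spanning tree $T$ with root $r$, set $\theta(r)=1$, and for each vertex $v$ define $\theta(v):=\phi(P_v)$, where $P_v$ is the unique $r$--$v$ path in $T$. By construction the relation $\theta(v_i)^{-1}\theta(v_j)=\phi(e_{ij})$ holds along every tree edge, simply by telescoping the path gains.

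The crux is to verify this same relation on the non-tree edges, and this is exactly where the balance hypothesis enters. For a non-tree edge $e_{ij}$, adjoining it to $T$ creates a unique fundamental cycle $C$ consisting of the tree path from $v_i$ to $v_j$ together with $e_{ji}$. I would first record that for any two vertices joined by a tree path one has $\phi(\text{path } v_i\to v_j)=\theta(v_i)^{-1}\theta(v_j)$, which follows by telescoping once one notes that traversing an edge and immediately returning contributes gain $\phi(e)\phi(e)^{-1}=1$. Balance gives $\phi(C)=1$, that is $\theta(v_i)^{-1}\theta(v_j)\,\phi(e_{ij})^{-1}=1$, which rearranges to the desired $\theta(v_i)^{-1}\theta(v_j)=\phi(e_{ij})$. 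Hence $\theta$ is a potential function on all of $\Gamma$, and for a disconnected graph one repeats the construction on each component. I expect the only delicate point to be the bookkeeping of orientation conventions, so that the telescoped products and the fundamental-cycle gain come out with the correct exponents; everything else is forced by the abelian structure of $\mathbb{T}$.
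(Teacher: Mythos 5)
Your proof is correct. Note that the paper itself offers no proof of this lemma: it is quoted as background with citations to Zaslavsky's work on biased/gain graphs, so there is no in-paper argument to compare against. Your argument is the standard one from that literature: the cycle $(1)\Rightarrow(3)\Rightarrow(2)\Rightarrow(1)$, with the two easy links being the algebraic computation $\phi^{\zeta}=1$ for $\zeta=\theta^{-1}$ and the switching-invariance of cycle gains (the telescoping is clean here because $\mathbb{T}$ is abelian; in the general non-abelian gain-graph setting the cycle gain is only invariant up to conjugation, but neutrality is still preserved). The substantive step $(1)\Rightarrow(3)$ via a rooted spanning tree, defining $\theta(v)$ as the gain of the tree path from the root, and invoking balance on each fundamental cycle to handle the non-tree edges, is exactly the classical construction; your verification that the tree-edge relation $\theta(v_i)^{-1}\theta(v_j)=\phi(e_{ij})$ telescopes along arbitrary tree paths, and that the fundamental-cycle gain $\theta(v_i)^{-1}\theta(v_j)\phi(e_{ij})^{-1}=1$ rearranges to the potential condition, is accurate, as is the reduction to connected components.
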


The multiset of all eigenvalues of $A\in \mathbb{C}^{n\times n}$, denoted by $\sigma(A)$, is called the \emph{spectrum} of $A$.  We write $A^*$ for the conjugate transpose of the matrix $A$.  Since the eigenvalues of any Hermitian matrix $A$ are real, we assume they are labeled and ordered according to the following convention:
\[ \lambda_n(A) \leq \lambda_{n-1}(A) \leq \cdots \leq \lambda_2(A) \leq \lambda_1(A).\]

If $A\in \mathbb{C}^{n\times n}$ is Hermitian, then the quadratic form $\mathbf{x}^*A\mathbf{x}$, for some $\mathbf{x}\in\mathbb{C}^n\backslash{\{\mathbf{0}\}}$, can be used to calculate the eigenvalues of $A$. In particular, we can calculate the smallest and largest eigenvalues using the following, usually called the Rayleigh-Ritz Theorem.
\begin{lem}[\cite{MR1084815}, Theorem 4.2.2]\label{RRThm} Let $A\in\mathbb{C}^{n\times n}$ be Hermitian.  Then
\begin{align*}
\lambda_1(A)&=\max_{\mathbf{x}\in \mathbb{C}^n \backslash \{\mathbf{0}\}} \frac{\mathbf{x}^*A\mathbf{x}}{\mathbf{x}^*\mathbf{x}} = \max_{\mathbf{x}^{*}\mathbf{x}=1} \mathbf{x}^*A\mathbf{x},\\
\lambda_n(A)&=\min_{\mathbf{x}\in \mathbb{C}^n \backslash \{\mathbf{0}\}} \frac{\mathbf{x}^*A\mathbf{x}}{\mathbf{x}^*\mathbf{x}} =\min_{\mathbf{x}^{*}\mathbf{x}=1} \mathbf{x}^*A\mathbf{x}.
\end{align*}
\end{lem}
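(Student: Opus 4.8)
The plan is to invoke the spectral theorem for Hermitian matrices and thereby reduce the constrained optimization to an elementary computation in an eigenbasis. I would first record the scaling invariance of the Rayleigh quotient: for any nonzero scalar $c$, replacing $\mathbf{x}$ by $c\mathbf{x}$ multiplies both $\mathbf{x}^*A\mathbf{x}$ and $\mathbf{x}^*\mathbf{x}$ by $|c|^2$, leaving the quotient unchanged. Consequently the extremum over all nonzero vectors agrees with the extremum over the unit sphere $\{\mathbf{x}:\mathbf{x}^*\mathbf{x}=1\}$, which disposes of the second equality in each line and lets me restrict attention to unit vectors.

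Since $A$ is Hermitian, the spectral theorem supplies an orthonormal basis $\{\mathbf{u}_1,\ldots,\mathbf{u}_n\}$ of $\mathbb{C}^n$ consisting of eigenvectors, with $A\mathbf{u}_i=\lambda_i(A)\mathbf{u}_i$. Writing an arbitrary unit vector as $\mathbf{x}=\sum_{i=1}^n c_i\mathbf{u}_i$ with $\sum_{i=1}^n|c_i|^2=1$, orthonormality yields $\mathbf{x}^*A\mathbf{x}=\sum_{i=1}^n \lambda_i(A)|c_i|^2$, exhibiting the quadratic form as a convex combination of the eigenvalues. Using the ordering $\lambda_n(A)\leq\cdots\leq\lambda_1(A)$ together with $\sum_i|c_i|^2=1$, I would then bound
\[
\lambda_n(A)=\lambda_n(A)\sum_{i=1}^n|c_i|^2 \leq \sum_{i=1}^n \lambda_i(A)|c_i|^2 \leq \lambda_1(A)\sum_{i=1}^n|c_i|^2=\lambda_1(A),
\]
so that the Rayleigh quotient of every unit vector is trapped between $\lambda_n(A)$ and $\lambda_1(A)$.

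Finally I would confirm attainment, so that these are genuine maxima and minima rather than mere bounds: taking $\mathbf{x}=\mathbf{u}_1$ gives $\mathbf{x}^*A\mathbf{x}=\lambda_1(A)$, and taking $\mathbf{x}=\mathbf{u}_n$ gives $\mathbf{x}^*A\mathbf{x}=\lambda_n(A)$. I do not expect a genuine obstacle here, since the spectral theorem performs the essential work, collapsing an optimization over the sphere into an inequality for convex combinations of real numbers. The only point deserving care is the logical bookkeeping: one must separately verify the inequality for \emph{all} $\mathbf{x}$ and the attainment at a \emph{specific} eigenvector, and keep the scaling invariance argument distinct from the eigenbasis computation so that both displayed forms of each extremum are justified.
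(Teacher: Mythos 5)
Your proof is correct and complete: the scaling-invariance reduction, the eigenbasis expansion giving $\mathbf{x}^*A\mathbf{x}=\sum_i\lambda_i(A)|c_i|^2$, the two-sided bound, and the attainment at $\mathbf{u}_1$ and $\mathbf{u}_n$ together establish all four equalities. Note that the paper does not prove this lemma at all---it is quoted as background with a citation to Horn and Johnson's \emph{Matrix Analysis} (Theorem 4.2.2)---and your spectral-theorem argument is precisely the standard proof found in that reference, so there is nothing further to reconcile.
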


The following lemma is credited to Weyl.

\begin{lem}[\cite{MR1084815}, Theorem 4.3.1]\label{WeylINEQ} Let $A,B\in\mathbb{C}^{n\times n}$ be Hermitian.  Then for any $k\in\{1,\ldots,n\}$,
\begin{equation*}
\lambda_{k}(A)+\lambda_n(B) \leq \lambda_{k}(A+B) \leq \lambda_{k}(A)+\lambda_1(B).
\end{equation*}
\end{lem}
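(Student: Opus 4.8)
The statement is Weyl's inequality, which the paper imports from Horn–Johnson, so my aim is to reconstruct a self-contained argument from the tools already in hand: the spectral theorem for Hermitian matrices and the Rayleigh–Ritz characterization of the extreme eigenvalues (Lemma~\ref{RRThm}). The plan is to prove the right-hand inequality $\lambda_k(A+B)\le\lambda_k(A)+\lambda_1(B)$ by a dimension-counting argument, and then to obtain the left-hand inequality $\lambda_k(A)+\lambda_n(B)\le\lambda_k(A+B)$ for free by a substitution.

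First I would invoke the spectral theorem: since $A$ and $A+B$ are Hermitian, each admits an orthonormal eigenbasis with real eigenvalues, which I order according to the paper's descending convention $\lambda_1\ge\lambda_2\ge\cdots\ge\lambda_n$. Let $w_1,\ldots,w_n$ be eigenvectors of $A+B$ and $u_1,\ldots,u_n$ eigenvectors of $A$. The key observation is the ``easy half'' of the Courant–Fischer principle: on $S_1:=\operatorname{span}(w_1,\ldots,w_k)$, every unit vector $x$ satisfies $x^*(A+B)x\ge\lambda_k(A+B)$, because expanding $x$ in the $w_i$ writes $x^*(A+B)x$ as a convex combination of $\lambda_1(A+B),\ldots,\lambda_k(A+B)$, each of which is at least $\lambda_k(A+B)$; symmetrically, on $S_2:=\operatorname{span}(u_k,\ldots,u_n)$ every unit vector satisfies $x^*Ax\le\lambda_k(A)$.

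Next I would combine the two subspaces. Since $\dim S_1+\dim S_2=k+(n-k+1)=n+1>n$, the intersection $S_1\cap S_2$ contains a nonzero vector; normalize it so that $x^*x=1$. Then chaining the two bounds with the additive splitting of the quadratic form gives
\begin{equation*}
\lambda_k(A+B)\le x^*(A+B)x = x^*Ax + x^*Bx \le \lambda_k(A)+\lambda_1(B),
\end{equation*}
where the final term is bounded using Rayleigh–Ritz (Lemma~\ref{RRThm}). This establishes the upper bound. To recover the lower bound I would apply the inequality just proved to the Hermitian pair $(A+B,\,-B)$, obtaining $\lambda_k(A)\le\lambda_k(A+B)+\lambda_1(-B)$; since $\lambda_1(-B)=-\lambda_n(B)$, rearranging yields $\lambda_k(A)+\lambda_n(B)\le\lambda_k(A+B)$.

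I expect the main obstacle to be the ``easy half'' of Courant–Fischer used in the second step: verifying carefully, under the descending ordering convention, that the quadratic form is controlled by $\lambda_k$ on the appropriate $k$- and $(n-k+1)$-dimensional eigenspaces, and that the count $n+1>n$ genuinely forces a common vector. The arithmetic of the subspace dimensions and the direction of each inequality are precisely where an indexing or sign error would most easily creep in; once the intersecting vector $x$ is produced, the remainder is only a routine splitting of the Rayleigh quotient.
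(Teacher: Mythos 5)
The paper offers no proof of this lemma at all: it is imported by citation from Horn and Johnson (Theorem 4.3.1), so there is no internal argument to measure yours against, and your blind reconstruction supplies exactly what the paper omits. Your proof is correct. The two ``easy half'' bounds are right under the descending convention: on $S_1=\operatorname{span}(w_1,\ldots,w_k)$ the quadratic form of $A+B$ is a convex combination of $\lambda_1(A+B),\ldots,\lambda_k(A+B)$, hence at least $\lambda_k(A+B)$, and on $S_2=\operatorname{span}(u_k,\ldots,u_n)$ the form of $A$ is at most $\lambda_k(A)$. The dimension count is sound, since for subspaces of $\mathbb{C}^n$ one has $\dim(S_1\cap S_2)\ge \dim S_1+\dim S_2-n=1$, so a common unit vector $x$ exists, and then $\lambda_k(A+B)\le x^*(A+B)x=x^*Ax+x^*Bx\le\lambda_k(A)+\lambda_1(B)$, the last step by Lemma~\ref{RRThm}. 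The substitution trick for the other half is also valid: applying the proved inequality to the pair $(A+B,-B)$ and using $\lambda_1(-B)=-\lambda_n(B)$ (the spectrum of $-B$ is the negated, reversed spectrum of $B$) gives $\lambda_k(A)+\lambda_n(B)\le\lambda_k(A+B)$. In substance this is the same subspace-intersection (Courant--Fischer) argument that the cited source uses, so you have in effect reconstructed the reference's proof; what your write-up buys in the context of this paper is self-containedness, since it relies only on the spectral theorem and Lemma~\ref{RRThm}, both already available here.
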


An $r\times r$ \emph{principal submatrix} of $A\in \mathbb{C}^{n\times n}$, denoted by $A_r$, is a matrix obtained by deleting $n-r$ rows and the corresponding columns of $A$.  The next lemma is sometimes called the the Cauchy Interlacing Theorem, or the inclusion principle.

\begin{lem}[\cite{MR1084815}, Theorem 4.3.15]\label{interlacinglemma} 
Let $A\in\mathbb{C}^{n\times n}$ be Hermitian and $r\in \{1,\ldots,n\}$.  Then for all $k\in \{1,\ldots,r\}$,
\begin{center} $\lambda_{k+n-r}(A) \leq \lambda_{k}(A_r) \leq \lambda_{k}(A)$.
\end{center}
\end{lem}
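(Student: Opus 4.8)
The statement is the Cauchy interlacing theorem, and my plan is to prove both inequalities by a single dimension-counting argument built on the Rayleigh quotient, so that Lemma \ref{RRThm} does essentially all the analytic work. First I would invoke the spectral theorem to fix an orthonormal eigenbasis $\mathbf{u}_1,\dots,\mathbf{u}_n$ of $A$ with $A\mathbf{u}_i=\lambda_i(A)\mathbf{u}_i$, ordered by the paper's decreasing convention. Let $W\subseteq\mathbb{C}^n$ be the $r$-dimensional coordinate subspace spanned by the standard basis vectors indexed by the rows (and columns) retained in $A_r$. The key observation is that a vector $\mathbf{x}\in W$ has zeros in the deleted coordinates, so $\mathbf{x}^*A\mathbf{x}$ sees only the retained entries of $A$; identifying $\mathbf{x}$ with its nonzero part $\mathbf{y}\in\mathbb{C}^r$ gives $\mathbf{x}^*A\mathbf{x}=\mathbf{y}^*A_r\mathbf{y}$ and $\mathbf{x}^*\mathbf{x}=\mathbf{y}^*\mathbf{y}$. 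Thus the Rayleigh quotient of $A_r$ is literally that of $A$ restricted to $W$, and Lemma \ref{RRThm} applied to $A_r$ controls it there.

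Two elementary facts then finish the proof. First, expanding any vector in an orthonormal eigenbasis shows that on the span of a set of eigenvectors of a Hermitian matrix the Rayleigh quotient is a weighted average of the corresponding eigenvalues, hence lies between their minimum and maximum. Second, any two subspaces $S,U\subseteq\mathbb{C}^n$ with $\dim S+\dim U>n$ meet in a nonzero vector, since $\dim(S\cap U)\ge\dim S+\dim U-n$. For the upper bound $\lambda_k(A_r)\le\lambda_k(A)$, let $S\subseteq W$ be the span of the eigenvectors of $A_r$ for its top $k$ eigenvalues, so $\dim S=k$, and let $U=\operatorname{span}(\mathbf{u}_k,\dots,\mathbf{u}_n)$, so $\dim U=n-k+1$. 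As $\dim S+\dim U=n+1$, there is a nonzero $\mathbf{x}\in S\cap U$; membership in $S\subseteq W$ forces its Rayleigh quotient to be at least $\lambda_k(A_r)$, while membership in $U$ forces it to be at most $\lambda_k(A)$, and combining the two yields the inequality.

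For the lower bound $\lambda_{k+n-r}(A)\le\lambda_k(A_r)$ I would repeat the argument with $T\subseteq W$ the span of the eigenvectors of $A_r$ for its bottom $r-k+1$ eigenvalues and $U'=\operatorname{span}(\mathbf{u}_1,\dots,\mathbf{u}_{k+n-r})$. Here $\dim T+\dim U'=(r-k+1)+(k+n-r)=n+1$ again produces a common nonzero vector whose Rayleigh quotient is simultaneously at most $\lambda_k(A_r)$ (from $T$) and at least $\lambda_{k+n-r}(A)$ (from $U'$). The step demanding the most care is the index bookkeeping under the decreasing ordering $\lambda_n\le\cdots\le\lambda_1$: one must verify that the chosen eigenvector spans have exactly the claimed dimensions, that these sum to $n+1$ so the intersections are nontrivial, and that ``top $k$'' and ``bottom $r-k+1$'' of $A_r$ pair with $\operatorname{span}(\mathbf{u}_k,\dots,\mathbf{u}_n)$ and $\operatorname{span}(\mathbf{u}_1,\dots,\mathbf{u}_{k+n-r})$ so as to sandwich the correct eigenvalue of $A$. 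Since the whole setup is symmetric under $A\mapsto-A$, one could instead deduce the lower bound from the upper bound applied to $-A$, which may streamline this bookkeeping.
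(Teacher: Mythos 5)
Your proof is correct, but there is nothing in the paper to compare it against: the paper states this lemma purely as background, citing it to Horn and Johnson's \emph{Matrix Analysis} (Theorem 4.3.15) without giving any proof. What you have written is essentially the standard textbook argument for Cauchy interlacing: identify $\mathbb{C}^r$ with the coordinate subspace $W$ so that $\mathbf{x}^*A\mathbf{x}=\mathbf{y}^*A_r\mathbf{y}$, use the fact that the Rayleigh quotient on a span of eigenvectors is a convex combination of the corresponding eigenvalues, and intersect subspaces whose dimensions sum to $n+1$. Your bookkeeping checks out under the paper's decreasing convention: $\dim S+\dim U=k+(n-k+1)=n+1$ for the upper bound, and $\dim T+\dim U'=(r-k+1)+(k+n-r)=n+1$ for the lower bound, with each intersection vector sandwiched exactly as claimed. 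Your closing remark is also valid: since $\lambda_k(-A)=-\lambda_{n-k+1}(A)$, the lower bound follows from the upper bound applied to $-A$ after reindexing $j=r-k+1$, which indeed shortens the second half. The only stylistic quibble is that you invoke Lemma \ref{RRThm} as doing ``the analytic work,'' when in fact your argument rests on the slightly stronger weighted-average observation (a form of Courant--Fischer reasoning) rather than on the max/min characterization itself; this is harmless, since you prove the observation directly from the spectral theorem.
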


The \emph{spectral radius} of a matrix $B\in \mathbb{C}^{n\times n}$ is $\rho(B):=\max\{|\lambda_i| : \lambda_i \text{ is an eigenvalue of } B \}$.  For $n\geq 2$, a matrix $B\in \mathbb{C}^{n\times n}$ is \emph{reducible} if there is a permutation matrix $P\in\mathbb{C}^{n\times n}$ such that $P^{\text{T}}BP$ is block upper triangular.  A matrix $B\in \mathbb{C}^{n\times n}$ is \emph{irreducible} if it is not reducible.  The following is part of the Perron-Frobenius Theorem.

\begin{lem}[\cite{MR1084815}, Theorem 8.4.4]\label{PFThm}  Suppose $A=(a_{ij})\in \mathbb{R}^{n\times n}$ is irreducible and $a_{ij}\geq 0$ for all $i,j\in \{1,\ldots,n\}$.  Then
\begin{enumerate}
\item $\rho(A)$ is an eigenvalue of $A$ and $\rho(A)>0$,
\item There exists an eigenvector $\mathbf{x}=(x_1,\ldots,x_n)\in \mathbb{R}^n$ with $x_i>0$ for each $i\in \{1,\ldots,n\}$ such that $A\mathbf{x}=\rho(A)\mathbf{x}$.
\end{enumerate} 
\end{lem}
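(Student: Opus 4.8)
The plan is to prove both parts at once through the Collatz--Wielandt variational description of the spectral radius, using irreducibility only through the fact that $(I+A)^{n-1}$ is entrywise positive. First I would introduce, for each nonnegative nonzero $\mathbf{x}\in\mathbb{R}^n$, the quantity
\[
r(\mathbf{x}) := \min_{i:\,x_i>0} \frac{(A\mathbf{x})_i}{x_i},
\]
which is the largest real $\lambda$ satisfying $A\mathbf{x}\geq \lambda\mathbf{x}$ coordinatewise, and set $r:=\sup r(\mathbf{x})$ over all such $\mathbf{x}$. Since $r(\mathbf{x})$ is invariant under positive scaling, the supremum may be taken over the compact simplex $\Sigma:=\{\mathbf{x}\geq \mathbf{0}:\mathbf{j}^{\mathrm{T}}\mathbf{x}=1\}$. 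Taking $\mathbf{x}=\mathbf{j}$ already shows $r>0$: since $A$ is irreducible and $n\geq 2$, no row of $A$ can be identically zero, so every coordinate of $A\mathbf{j}$ is positive and hence $r(\mathbf{j})>0$.

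The key structural input is that irreducibility forces $(I+A)^{n-1}>\mathbf{0}$ entrywise: passing to the associated directed graph, $\bigl((I+A)^{n-1}\bigr)_{ij}>0$ exactly when there is a walk of length at most $n-1$ from $i$ to $j$, and strong connectivity guarantees such a walk for every pair. I would use this to handle the one delicate point, namely that $r(\mathbf{x})$ is only upper semicontinuous, since it can jump where coordinates vanish. Writing $\mathbf{y}:=(I+A)^{n-1}\mathbf{x}$, one checks $\mathbf{y}>\mathbf{0}$ and, because $A$ commutes with $(I+A)^{n-1}$, that $r(\mathbf{y})\geq r(\mathbf{x})$. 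Thus the supremum is unchanged if restricted to the image of $\Sigma$ under this smoothing map, a compact set of strictly positive vectors on which $r$ is genuinely continuous; hence the supremum is attained at some $\mathbf{z}>\mathbf{0}$ with $r(\mathbf{z})=r$.

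Next I would show $A\mathbf{z}=r\mathbf{z}$. By definition $A\mathbf{z}-r\mathbf{z}\geq \mathbf{0}$; were this vector nonzero, applying the positive matrix $(I+A)^{n-1}$ would give $(I+A)^{n-1}(A\mathbf{z}-r\mathbf{z})>\mathbf{0}$, i.e. $A\mathbf{w}>r\mathbf{w}$ for $\mathbf{w}:=(I+A)^{n-1}\mathbf{z}>\mathbf{0}$, which forces $r(\mathbf{w})>r$ and contradicts maximality. Hence $\mathbf{z}$ is a strictly positive eigenvector with eigenvalue $r>0$, giving part (2) together with the positivity claim in part (1). To identify $r=\rho(A)$, I would take any eigenvalue $\lambda$ of $A$ with eigenvector $\mathbf{v}$; applying the triangle inequality coordinatewise to $\lambda\mathbf{v}=A\mathbf{v}$ and using $a_{ij}\geq 0$ yields $|\lambda|\,|\mathbf{v}|\leq A|\mathbf{v}|$, so $r(|\mathbf{v}|)\geq|\lambda|$ and therefore $|\lambda|\leq r$; since $r$ is itself an eigenvalue, this gives $\rho(A)=r$.

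I expect the main obstacle to be the attainment of the supremum in the second step: because $r(\cdot)$ fails to be continuous on all of $\Sigma$, one cannot invoke compactness directly, and the positivity-smoothing device $(I+A)^{n-1}$ is precisely what repairs this. That same device then does double duty in upgrading the inequality $A\mathbf{z}\geq r\mathbf{z}$ to the equality $A\mathbf{z}=r\mathbf{z}$, so getting that one positivity fact cleanly from irreducibility is the crux of the argument.
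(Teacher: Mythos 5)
Your proof is correct, but there is nothing in the paper to compare it against: the paper never proves this lemma, it simply imports it as background, citing \cite{MR1084815}, Theorem 8.4.4 (Horn and Johnson). What you have written is the classical Collatz--Wielandt (Wielandt) argument, and the genuinely delicate points are all handled properly: you use the entrywise positivity of $(I+A)^{n-1}$ both to replace the simplex by a compact set of strictly positive vectors on which $r(\cdot)$ is continuous (so the supremum is attained at some $\mathbf{z}>\mathbf{0}$), and to upgrade $A\mathbf{z}\geq r\mathbf{z}$ to the eigenvalue equation; the coordinatewise triangle inequality then correctly identifies $r=\rho(A)$. The two facts you invoke without proof --- that irreducibility of a nonnegative matrix is equivalent to strong connectivity of its digraph, whence $(I+A)^{n-1}>\mathbf{0}$ entrywise --- are standard and are precisely the ingredients the cited source also relies on; the difference is that Horn and Johnson deduce the irreducible case from Perron's theorem for positive matrices applied to $(I+A)^{n-1}$, whereas you run the variational argument directly on $A$, which makes your proof self-contained and avoids needing the positive-matrix theorem as a prior step. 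One minor remark: your argument that $r>0$ (via the absence of zero rows) genuinely requires $n\geq 2$; this is consistent with the paper, whose definition of reducibility is only stated for $n\geq 2$, but it is worth noting that the $1\times 1$ zero matrix is a degenerate case in which conclusion (1) fails.
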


\section{Incidence Matrices}
Suppose $\Phi=(\Gamma,\phi)$ is a $\mathbb{T}$-gain graph.  An \emph{incidence matrix} $\Eta(\Phi)=(\eta_{ve})$ is any $n\times m$ matrix, with entries in $\mathbb{T}\cup\{0\}$, where
\begin{equation*}
\eta_{v_i e}=
\begin{cases} -\eta_{v_j e}\phi(e_{ij}) &\text{if } e=e_{ij} \in E,
\\
0 &\text{otherwise;}
\end{cases}
\end{equation*}
furthermore, $\eta_{v_i e}\in \mathbb{T}$ if $e_{ij}\in E$.  We say ``an" incidence matrix, because with this definition $\Eta(\Phi)$ is not unique.  Each column can be multiplied by any element in $\mathbb{T}$ and the result can still be called an incidence matrix.  For example, we can choose $\eta_{v_j e}=1$ so $\eta_{v_ie}=-\phi(e_{ij})$ for each $e_{ij}\in E$.  This particular incidence matrix can be viewed as a generalization of an oriented incidence matrix of an unsigned graph.  Henceforth, we will write $\Eta(\Phi)$ to indicate that some fixed incidence matrix has been chosen.

\begin{lem}\label{LaplacianEtaEq} Let $\Phi=(\Gamma,\phi)$ be a $\mathbb{T}$-gain graph.  Then $L(\Phi)=\Eta(\Phi)\Eta(\Phi)^*$.
\end{lem}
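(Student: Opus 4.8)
The plan is to verify the identity entrywise, comparing the $(i,k)$ entry of $\Eta(\Phi)\Eta(\Phi)^*$ with that of $L(\Phi)=D(\Gamma)-A(\Phi)$. By the definition of the conjugate transpose and of matrix multiplication, the $(i,k)$ entry of $\Eta(\Phi)\Eta(\Phi)^*$ is $\sum_{e\in E}\eta_{v_i e}\,\overline{\eta_{v_k e}}$, so the whole argument reduces to evaluating this sum and checking that it equals $d_i$ on the diagonal and $-a_{ik}$ off the diagonal.

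First I would handle the diagonal terms $i=k$. Here the sum is $\sum_{e\in E}|\eta_{v_i e}|^2$. A nonzero entry $\eta_{v_i e}$ lies in $\mathbb{T}$ precisely when $e$ is incident to $v_i$, so each such term contributes $1$ and every other term contributes $0$; counting the edges incident to $v_i$ gives exactly $d_i$. This matches the $(i,i)$ entry of $L(\Phi)$, since $A(\Phi)$ has zero diagonal.

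Next I would treat the off-diagonal terms $i\neq k$. A product $\eta_{v_i e}\,\overline{\eta_{v_k e}}$ is nonzero only when $e$ is incident to both $v_i$ and $v_k$. Because $\Gamma$ is simple there is at most one such edge: if $v_i$ and $v_k$ are nonadjacent the sum is empty and equals $0=-a_{ik}$; if they are adjacent the sum collapses to the single term coming from $e=e_{ik}$. Applying the defining relation $\eta_{v_i e}=-\eta_{v_k e}\phi(e_{ik})$ together with $|\eta_{v_k e}|^2=1$, this term simplifies to $-\phi(e_{ik})|\eta_{v_k e}|^2=-\phi(e_{ik})=-a_{ik}$, which is exactly the $(i,k)$ entry of $-A(\Phi)$, hence of $L(\Phi)$.

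I expect no genuine obstacle; the computation is routine once the definitions are unwound. The two points deserving care are that the nonzero incidence entries have modulus one, so that $|\eta_{ve}|^2=1$ is used consistently, and that the simplicity of $\Gamma$ forces the off-diagonal edge sum to reduce to a single term, since without simplicity one would instead have to sum the contributions of all parallel edges joining $v_i$ and $v_k$. Because the two matrices agree in every entry, I conclude $L(\Phi)=\Eta(\Phi)\Eta(\Phi)^*$.
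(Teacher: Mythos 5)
Your proof is correct and follows essentially the same route as the paper's: an entrywise comparison of $\Eta(\Phi)\Eta(\Phi)^*$ with $D(\Gamma)-A(\Phi)$, using $|\eta_{ve}|=1$ on the diagonal and simplicity of $\Gamma$ plus the relation $\eta_{v_i e}=-\eta_{v_k e}\phi(e_{ik})$ off the diagonal. The only difference is cosmetic: you spell out the nonadjacent case explicitly, which the paper leaves implicit.
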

\begin{proof}  The $(i,j)$-entry of $\Eta(\Phi)\Eta(\Phi)^*$ corresponds to the $i^{\text{th}}$ row of $\Eta(\Phi)$, indexed by $v_i \in V$, multiplied by the $j^{\text{th}}$ column of $\Eta(\Phi)^*$, indexed by $v_j \in V$.  Therefore, this entry is precisely $\sum_{e\in E} \eta_{v_i e}\bar{\eta}_{v_j e}$.  If $i=j$, then the sum simplifies to $\sum_{e\in E} |\eta_{v_i e}|^2=d_i$, since $|\eta_{v_i e}|=1$ if $v_i$ is incident to $e$.  If $i \neq j$, then since $\Gamma$ is simple, the sum simplifies to $\eta_{v_ie_{ij}}\bar{\eta}_{v_je_{ij}}= -\eta_{v_je_{ij}}\phi(e_{ij})\bar{\eta}_{v_je_{ij}}=- |\eta_{v_je_{ij}}|^2\phi(e_{ij})=- \phi(e_{ij})=-a_{ij}$.  Therefore, $\Eta(\Phi)\Eta(\Phi)^*=D(\Gamma)-A(\Phi)=L(\Phi)$. 
\end{proof}

Since $L(\Phi)=\Eta(\Phi)\Eta(\Phi)^*$, $L(\Phi)$ is positive semidefinite, and therefore, its eigenvalues are nonnegative.

Switching a $\mathbb{T}$-gain graph can be described as matrix multiplication of the incidence matrix, and matrix conjugation of the adjacency and Laplacian matrices.  For a switching function $\zeta$, we define a diagonal matrix $D(\zeta):=\text{diag}(\zeta(v_i):v_i\in V)$.  The following lemma shows how to calculate the switched gain graph's incidence, adjacency and Laplacian matrices.  

\begin{lem}\label{HALSwitched} Let $\Phi=(\Gamma,\phi)$ be a $\mathbb{T}$-gain graph.  Let $\zeta$ be a switching function on $\Phi$. Then
\begin{itemize}
\item $\Eta(\Phi^{\zeta}) = D(\zeta)^*\Eta(\Phi)$,
\item $A(\Phi^{\zeta})=D(\zeta)^*A(\Phi)D(\zeta)$,
\item $L(\Phi^{\zeta})=D(\zeta)^*L(\Phi)D(\zeta)$.
\end{itemize}
\end{lem}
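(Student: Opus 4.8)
The plan is to establish the three identities in order, treating the adjacency case first as the base computation, then the incidence case (which requires the most care, since the incidence matrix is not unique), and finally deducing the Laplacian identity from the incidence identity together with Lemma~\ref{LaplacianEtaEq}. Throughout, the key arithmetic fact is that $\zeta$ takes values in $\mathbb{T}$, so $\overline{\zeta(v_i)}=\zeta(v_i)^{-1}$ and $\overline{\zeta(v_i)}\zeta(v_i)=1$.

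First I would prove the adjacency identity by comparing entries. Since $D(\zeta)$ and $D(\zeta)^*$ are diagonal, the $(i,j)$-entry of $D(\zeta)^*A(\Phi)D(\zeta)$ is simply $\overline{\zeta(v_i)}\,a_{ij}\,\zeta(v_j)$. When $v_i$ is adjacent to $v_j$ this equals $\zeta(v_i)^{-1}\phi(e_{ij})\zeta(v_j)=\phi^{\zeta}(e_{ij})$, and when $v_i\not\sim v_j$ both sides are $0$; this is exactly the $(i,j)$-entry of $A(\Phi^{\zeta})$ by the definition of the adjacency matrix.

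The incidence identity is the crux, and the subtlety is that ``$\Eta(\Phi^{\zeta})$'' denotes \emph{any} matrix satisfying the incidence-matrix conditions for $\Phi^{\zeta}$, so the claim is really that $D(\zeta)^*\Eta(\Phi)$ is one such matrix. I would fix an incidence matrix $\Eta(\Phi)=(\eta_{ve})$ and set $\eta'_{v_i e}:=\overline{\zeta(v_i)}\,\eta_{v_i e}$, the entries of $D(\zeta)^*\Eta(\Phi)$. Three conditions must be checked: that $\eta'_{v_i e}=0$ when $v_i$ is not incident to $e$ (immediate, since $\eta_{v_i e}=0$); that $\eta'_{v_i e}\in\mathbb{T}$ when $v_i$ is incident to $e$ (immediate, as $\overline{\zeta(v_i)},\eta_{v_i e}\in\mathbb{T}$); and the defining relation $\eta'_{v_i e}=-\eta'_{v_j e}\phi^{\zeta}(e_{ij})$ for $e=e_{ij}$. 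For the last, I would substitute the relation $\eta_{v_i e}=-\eta_{v_j e}\phi(e_{ij})$ and the definition $\phi^{\zeta}(e_{ij})=\zeta(v_i)^{-1}\phi(e_{ij})\zeta(v_j)$, then cancel the factor $\overline{\zeta(v_j)}\zeta(v_j)=1$ and rewrite $\zeta(v_i)^{-1}=\overline{\zeta(v_i)}$; both sides reduce to $-\overline{\zeta(v_i)}\,\eta_{v_j e}\,\phi(e_{ij})$. The only thing to be careful about is keeping track of which unit-modulus scalars cancel, and this is precisely where $\zeta$ taking values in $\mathbb{T}$ is essential.

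Finally, the Laplacian identity follows with no new computation. By Lemma~\ref{LaplacianEtaEq} applied to $\Phi^{\zeta}$, the incidence identity just established, and Lemma~\ref{LaplacianEtaEq} applied to $\Phi$,
\[ L(\Phi^{\zeta})=\Eta(\Phi^{\zeta})\Eta(\Phi^{\zeta})^*=D(\zeta)^*\Eta(\Phi)\bigl(D(\zeta)^*\Eta(\Phi)\bigr)^*=D(\zeta)^*\Eta(\Phi)\Eta(\Phi)^*D(\zeta)=D(\zeta)^*L(\Phi)D(\zeta), \]
using $(D(\zeta)^*)^*=D(\zeta)$. (Alternatively one could argue directly from $L=D(\Gamma)-A$ together with $D(\zeta)^*D(\Gamma)D(\zeta)=D(\Gamma)$, which holds because $D(\Gamma)$ is diagonal and $|\zeta(v_i)|=1$; but the incidence route is cleaner and reuses the previous step.) I expect the incidence step to be the only genuine obstacle, since the non-uniqueness of $\Eta$ forces one to verify the defining conditions rather than compare fixed matrices; the adjacency and Laplacian steps are bookkeeping with unit scalars.
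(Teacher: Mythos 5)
Your proof is correct. Note that the paper states Lemma~\ref{HALSwitched} without any proof, treating it as routine verification, so there is no argument to compare against; your write-up is precisely the argument being taken for granted, and you handle the one genuinely delicate point correctly — namely that $\Eta(\Phi^{\zeta})=D(\zeta)^*\Eta(\Phi)$ must be read as ``$D(\zeta)^*\Eta(\Phi)$ satisfies the defining conditions of an incidence matrix for $\Phi^{\zeta}$,'' since incidence matrices are only determined up to scaling each column by an element of $\mathbb{T}$. The adjacency computation, and the derivation of the Laplacian identity from the incidence identity together with Lemma~\ref{LaplacianEtaEq} (or, as you note, directly from $L=D(\Gamma)-A$ and $D(\zeta)^*D(\Gamma)D(\zeta)=D(\Gamma)$), are exactly as they should be.
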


\begin{lem}[\cite{MR2017726}, Theorem 2.1]\label{rankHThm} Let $\Phi=(\Gamma,\phi)$ be a $\mathbb{T}$-gain graph.  Then
\[ \rk(\Eta(\Phi))=n-b(\Phi). \]
\end{lem}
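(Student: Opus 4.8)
The plan is to compute the rank of $\Eta(\Phi)$ through its null space. Since $\rk(\Eta(\Phi)) = \rk(\Eta(\Phi)^*) = n - \dim\ker(\Eta(\Phi)^*)$ by rank--nullity (the kernel being taken in $\mathbb{C}^n$, the domain of $\Eta(\Phi)^*$), it suffices to prove that $\dim\ker(\Eta(\Phi)^*) = b(\Phi)$; equivalently one could invoke Lemma~\ref{LaplacianEtaEq}, since $\ker(\Eta\Eta^*) = \ker(\Eta^*)$ over $\mathbb{C}$ forces $\dim\ker L(\Phi) = \dim\ker\Eta(\Phi)^*$. First I would read off the kernel equations: for $\mathbf{x} = (x_1,\dots,x_n)$, the entry of $\Eta(\Phi)^*\mathbf{x}$ indexed by an edge $e = e_{ij}$ is $\overline{\eta_{v_ie}}\,x_i + \overline{\eta_{v_je}}\,x_j$, and using $\eta_{v_ie} = -\eta_{v_je}\phi(e_{ij})$ together with $|\eta_{v_je}| = 1$ this collapses to $\overline{\eta_{v_je}}\,(x_j - \overline{\phi(e_{ij})}\,x_i)$. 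Hence $\mathbf{x}\in\ker(\Eta(\Phi)^*)$ if and only if $x_i = \phi(e_{ij})\,x_j$ for every edge $e_{ij}\in E$.

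Next I would exploit the fact that these equations never couple vertices lying in different connected components of $\Gamma$. Consequently $\ker(\Eta(\Phi)^*)$ splits as a direct sum of the solution spaces of the individual components, and the claim reduces to showing that a single connected component $\Phi_c$ contributes a solution space of dimension $1$ when it is balanced and $0$ when it is unbalanced. On a connected component the value $x_v$ at any vertex $v$ is determined by the value at a fixed base vertex $v_0$ by propagating the relation $x_i = \phi(e_{ij})\,x_j$ along a path from $v_0$ to $v$; thus the solution space is at most one-dimensional. If $\Phi_c$ is balanced, Lemma~\ref{HBGen} supplies a potential function $\theta$, and a direct check shows that $x_v := \theta(v)^{-1}$ satisfies $x_i = \phi(e_{ij})\,x_j$, producing a genuine nonzero solution, so the dimension is exactly $1$. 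If $\Phi_c$ is unbalanced, any nonzero solution would be nowhere zero (the gains are units, so propagation preserves nonvanishing) and its ratios $x_v/x_{v_0}$ would furnish a potential function, contradicting Lemma~\ref{HBGen}; hence the only solution is $\mathbf{0}$.

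Summing the contributions over all components gives $\dim\ker(\Eta(\Phi)^*) = b(\Phi)$, and therefore $\rk(\Eta(\Phi)) = n - b(\Phi)$. The one genuinely delicate point, and the main obstacle, is the consistency of the propagation around cycles in the unbalanced case: I must confirm that a nonzero vertex assignment satisfying $x_i = \phi(e_{ij})\,x_j$ along every edge cannot exist unless every cycle is neutral. Packaging this as the statement that ``existence of such an $\mathbf{x}$ is equivalent to existence of a potential function'' and then appealing to the equivalence of \emph{balanced} with the potential-function condition in Lemma~\ref{HBGen} is what keeps the argument clean; the alternative is to track the product of gains around an arbitrary cycle directly and observe that closing the loop forces that product to equal $1$.
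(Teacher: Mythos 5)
Your proposal cannot be compared against an internal proof, because the paper offers none: Lemma~\ref{rankHThm} is imported without proof from \cite{MR2017726}. Judged on its own, your argument is correct and complete, and it is pleasantly self-contained: it uses only rank--nullity, the defining relation $\eta_{v_ie}=-\eta_{v_je}\phi(e_{ij})$, and the equivalence of balance with the existence of a potential function (Lemma~\ref{HBGen}). The key steps all check out: the edge-$e_{ij}$ entry of $\Eta(\Phi)^*\mathbf{x}$ does collapse to $\overline{\eta_{v_je}}\,(x_j-\overline{\phi(e_{ij})}\,x_i)$, so the kernel is cut out by the relations $x_i=\phi(e_{ij})x_j$; these relations decouple over components; propagation from a base vertex bounds each component's contribution by $1$; and the potential function $\theta$ from Lemma~\ref{HBGen} yields the explicit kernel vector $x_v=\theta(v)^{-1}$, since $\phi(e_{ij})x_j=\theta(v_i)^{-1}\theta(v_j)\theta(v_j)^{-1}=x_i$. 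Two small repairs in the unbalanced case: first, for the ratios to lie in $\mathbb{T}$ you should note that $|\phi(e_{ij})|=1$ forces $|x_i|=|x_j|$ across every edge, so a nonzero solution has constant modulus on the component (this is the same propagation fact you invoke for nonvanishing, but it is what makes the ratios unimodular); second, the potential function is $\theta(v)=x_{v_0}/x_v$ rather than $x_v/x_{v_0}$ --- with your normalization one gets $\theta(v_i)^{-1}\theta(v_j)=x_j/x_i=\phi(e_{ij})^{-1}$, i.e.\ a potential for $\overline{\phi}$, which either needs a reciprocal or the (easy) remark that $(\Gamma,\overline{\phi})$ is balanced if and only if $\Phi$ is. Neither point is a gap; both are one-line fixes. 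A byproduct of your route worth noting: since you compute $\dim\ker(\Eta(\Phi)^*)$ directly and $\ker(\Eta\Eta^*)=\ker(\Eta^*)$ over $\mathbb{C}$, your argument simultaneously proves Corollary~\ref{rankLThm} without passing through Lemma~\ref{rankHThm} at all.
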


Since the ranks of $\Eta(\Phi)$ and $\Eta(\Phi)\Eta(\Phi)^*$ are the same the following is immediate.

\begin{cor}\label{rankLThm} Let $\Phi=(\Gamma,\phi)$ be a $\mathbb{T}$-gain graph.  Then 
\[ \rk(L(\Phi))=n-b(\Phi). \]
\end{cor}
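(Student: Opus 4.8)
The plan is to reduce the statement to the rank of the incidence matrix, which is already available from Lemma \ref{rankHThm}. The bridge between the two quantities is the factorization $L(\Phi)=\Eta(\Phi)\Eta(\Phi)^*$ supplied by Lemma \ref{LaplacianEtaEq}, so everything comes down to the purely linear-algebraic fact that a matrix and the Hermitian product of that matrix with its conjugate transpose have the same rank. The parenthetical remark preceding the statement already signals this as the intended route.

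First I would establish the key rank identity: for any $M\in\mathbb{C}^{n\times m}$, one has $\rk(MM^*)=\rk(M)$. By rank-nullity, and since $\rk(M^*)=\rk(M)$, it suffices to show $\ker(MM^*)=\ker(M^*)$. The inclusion $\ker(M^*)\subseteq\ker(MM^*)$ is immediate. For the reverse inclusion, suppose $MM^*\mathbf{x}=\mathbf{0}$; then $0=\mathbf{x}^*MM^*\mathbf{x}=(M^*\mathbf{x})^*(M^*\mathbf{x})$, so $M^*\mathbf{x}=\mathbf{0}$. Hence the two kernels coincide and the identity follows. I would then combine the pieces: writing $M=\Eta(\Phi)$, Lemma \ref{LaplacianEtaEq} gives $L(\Phi)=MM^*$, so the identity yields $\rk(L(\Phi))=\rk(\Eta(\Phi))$, and Lemma \ref{rankHThm} finishes with $\rk(\Eta(\Phi))=n-b(\Phi)$.

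There is essentially no obstacle here; the entire content is the standard identity $\rk(MM^*)=\rk(M)$, which is why the corollary is stated as immediate. The only point requiring care is to work consistently with the Hermitian adjoint $M^*$ rather than the transpose, since the entries lie in $\mathbb{C}$. This is precisely what makes the quadratic-form step $\mathbf{x}^*MM^*\mathbf{x}=(M^*\mathbf{x})^*(M^*\mathbf{x})$ a genuine squared norm, and it is the same observation behind the positive-semidefiniteness of $L(\Phi)$ noted after Lemma \ref{LaplacianEtaEq}.
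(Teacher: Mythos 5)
Your proposal is correct and follows exactly the route the paper intends: the paper's own proof consists of the single remark that $\rk(\Eta(\Phi))$ and $\rk(\Eta(\Phi)\Eta(\Phi)^*)$ coincide, combined with Lemma \ref{LaplacianEtaEq} and Lemma \ref{rankHThm}. Your kernel argument simply fills in the standard linear-algebra fact that the paper takes as known, so there is nothing further to add.
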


\section{Eigenvalues of the Adjacency Matrix}
The next lemma implies that a switching class has an adjacency spectrum.  This is immediate from Lemma \ref{HALSwitched}.

\begin{lem}\label{simASwitch} Let $\Phi_1=(\Gamma,\phi_1)$ and $\Phi_2=(\Gamma,\phi_2)$ both be $\mathbb{T}$-gain graphs. If $\Phi_1\sim \Phi_2$, then $A(\Phi_1)$ and $A(\Phi_2)$ have the same spectrum.
\end{lem}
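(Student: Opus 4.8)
The plan is to unpack the definition of switching equivalence and then invoke Lemma \ref{HALSwitched} to reduce the claim to the elementary fact that conjugation by a unitary matrix preserves the spectrum. Since $\Phi_1 \sim \Phi_2$, by definition there is a switching function $\zeta : V \rightarrow \mathbb{T}$ with $\Phi_2 = \Phi_1^{\zeta}$. The first step is therefore to write $A(\Phi_2) = A(\Phi_1^{\zeta})$ and apply the second bullet of Lemma \ref{HALSwitched}, which gives $A(\Phi_2) = D(\zeta)^* A(\Phi_1) D(\zeta)$.

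The one observation that makes everything go through is that $D(\zeta)$ is unitary. Indeed, $D(\zeta) = \operatorname{diag}(\zeta(v_i) : v_i \in V)$ has diagonal entries in $\mathbb{T}$, so each satisfies $|\zeta(v_i)| = 1$; hence $D(\zeta)^* D(\zeta) = \operatorname{diag}(\overline{\zeta(v_i)}\zeta(v_i)) = \operatorname{diag}(|\zeta(v_i)|^2) = I$, and likewise $D(\zeta) D(\zeta)^* = I$. Consequently $D(\zeta)^* = D(\zeta)^{-1}$. Substituting this into the identity from the first step yields $A(\Phi_2) = D(\zeta)^{-1} A(\Phi_1) D(\zeta)$, so $A(\Phi_1)$ and $A(\Phi_2)$ are similar matrices. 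Similar matrices have the same characteristic polynomial, and therefore the same eigenvalues with the same multiplicities, which is exactly the assertion $\sigma(A(\Phi_1)) = \sigma(A(\Phi_2))$.

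There is no genuine obstacle here; the only point that requires a moment's thought is recognizing that the $\mathbb{T}$-valued diagonal matrix $D(\zeta)$ is unitary, which is precisely what upgrades the congruence $D(\zeta)^* A(\Phi_1) D(\zeta)$ into a true similarity transformation. Without unitarity one would only have a congruence, which need not preserve eigenvalues, so this is the step to state explicitly even though it is elementary. This is exactly why the lemma is advertised as immediate from Lemma \ref{HALSwitched}.
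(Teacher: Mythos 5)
Your proof is correct and takes the same route as the paper: the paper simply declares the lemma immediate from Lemma \ref{HALSwitched}, and what you have written is exactly the intended unpacking --- $A(\Phi_2)=D(\zeta)^*A(\Phi_1)D(\zeta)$ with $D(\zeta)$ unitary, so the congruence is in fact a similarity and the spectra coincide. Your explicit remark that unitarity is what upgrades congruence to similarity is a worthwhile clarification, but there is no substantive difference from the paper's argument.
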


\begin{lem}\label{BalancedSpectrumA} If $\Phi=(\Gamma,\phi)$ is a balanced $\mathbb{T}$-gain graph, then $A(\Phi)$ and $A(\Gamma)$ have the same spectrum.
\end{lem}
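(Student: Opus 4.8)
The plan is to reduce the balanced case to the all-neutral case using the switching machinery already developed. By Lemma \ref{HBGen}, since $\Phi$ is balanced, we have $\Phi \sim (\Gamma,1)$; that is, there exists a switching function $\zeta$ with $\Phi^{\zeta} = (\Gamma,1)$. Observe that $(\Gamma,1)$ is the $\mathbb{T}$-gain graph with every gain equal to $1$, so its adjacency matrix $A(\Gamma,1)$ is exactly the ordinary adjacency matrix $A(\Gamma)$ of the underlying graph (entries are $1$ precisely on adjacencies). Hence it suffices to show $A(\Phi)$ and $A(\Gamma,1)$ share a spectrum.

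The key step is to invoke switching invariance. Applying Lemma \ref{simASwitch} to the switching-equivalent pair $\Phi$ and $\Phi^{\zeta} = (\Gamma,1)$ gives immediately that $A(\Phi)$ and $A(\Gamma,1) = A(\Gamma)$ have the same spectrum, which is the claim. Alternatively, and perhaps more transparently, I would unwind this via Lemma \ref{HALSwitched}: that lemma states $A(\Phi^{\zeta}) = D(\zeta)^* A(\Phi) D(\zeta)$, so $A(\Gamma) = A(\Gamma,1) = A(\Phi^{\zeta}) = D(\zeta)^* A(\Phi) D(\zeta)$. Since $D(\zeta)$ is a diagonal matrix with entries in $\mathbb{T}$, it is unitary (its diagonal entries have modulus $1$, so $D(\zeta)^* D(\zeta) = I$), and therefore $A(\Phi)$ and $A(\Gamma)$ are unitarily similar. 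Unitary similarity preserves the spectrum, giving $\sigma(A(\Phi)) = \sigma(A(\Gamma))$.

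There is essentially no hard step here: the statement is a direct corollary of the results already assembled in the excerpt, and the only thing to verify carefully is the identification $A(\Gamma,1) = A(\Gamma)$, which is immediate from the definition of the adjacency matrix once all gains are set to $1$. The one point worth stating explicitly, to keep the argument self-contained, is that $D(\zeta)$ is unitary because $\zeta$ takes values in $\mathbb{T}$; this is what upgrades the matrix conjugation of Lemma \ref{HALSwitched} into a genuine similarity that preserves eigenvalues. I would present the proof in the short form: cite Lemma \ref{HBGen} to produce the switching to $(\Gamma,1)$, then cite Lemma \ref{simASwitch} (or equivalently Lemma \ref{HALSwitched} together with the unitarity remark) to conclude equality of spectra.
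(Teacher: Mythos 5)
Your proposal is correct and follows exactly the paper's own argument: apply Lemma \ref{HBGen} to get $\Phi\sim(\Gamma,1)$, then invoke the switching invariance of the adjacency spectrum (Lemma \ref{simASwitch}). Your additional unwinding via Lemma \ref{HALSwitched} and the unitarity of $D(\zeta)$ is just the proof of Lemma \ref{simASwitch} made explicit, so the two arguments are the same.
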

\begin{proof}  By Lemma \ref{HBGen}, $\Phi\sim(\Gamma,1)$.  Thus, the result follows from Lemma \ref{simASwitch}. 
\end{proof}

The following is a generalization of a result for unsigned graphs \cite[p.2]{MR2571608}.

\begin{thm}\label{AdjSRub1} Let $\Phi=(\Gamma,\phi)$ be a $\mathbb{T}$-gain graph.  Then $\rho(A(\Phi)) \leq \Delta$.
\end{thm}
\begin{proof}
This proof is only a slight modification of the version for an unsigned graph.  Suppose $A(\Phi)\mathbf{x}=\lambda \mathbf{x}$ with $\mathbf{x}=(x_1,\ldots,x_n)\in \mathbb{C}^n\backslash\{\mathbf{0}\}$.  Then 
\[ \lambda x_i = \sum_{e_{ij} \in E(\Phi)} \phi(e_{ij}) x_j, \text{ for } i\in\{1,\ldots,n\}. \]
Let $|x_m| = \max _{i} |x_i|$.  Then
\[ |\lambda| |x_m| \leq \sum_{e_{mj} \in E(\Phi)} |\phi(e_{mj})| |x_j| \leq \Delta |x_m|.\]
Since $|x_m|\neq 0$ the proof is complete.  
\end{proof}

The following is a set of bounds which depend on the edge gains.  This is particularly interesting since the inequalities are not solely determined by the underlying graph of $\Phi$.  As lower bounds for $\lambda_1(A(\Phi))$, inequality \eqref{NeqAB1gain} is a generalization of a lower bound for the largest adjacency eigenvalue of an unsigned graph attributed to Collatz and Sinogowitz \cite{MR0087952}, and inequality \eqref{NeqAB2gain} is a generalization of a lower bound for the largest adjacency eigenvalue of an unsigned graph attributed to Hoffman \cite{MR0140441}.  

\begin{thm}\label{AdjacencyBounds1} Let $\Phi=(\Gamma,\phi)$ be a connected $\mathbb{T}$-gain graph.  Then
\begin{equation}\label{NeqAB1gain}
\lambda_{n}(A(\Phi)) \leq \frac{1}{n} \sum_{j=1}^n d_j^{\text{net}}  \leq \lambda_{1}(A(\Phi)),
\end{equation}
\begin{equation}\label{NeqAB2gain}
\lambda_{n}(A(\Phi)) \leq \sqrt{\frac{1}{n}\sum_{j=1}^n (d_j^{\text{net}})^2} \leq \lambda_{1}(A(\Phi)),
\end{equation}
and
\begin{equation}\label{NeqAB3gain}
\lambda_{n}(A(\Phi)) \leq \sqrt[3]{\frac{2}{n} \sum_{e_{ij}\in E(\Phi)} \text{Re}[\phi(e_{ij})]d_i^{\text{net}}d_j^{\text{net}}} \leq \lambda_{1}(A(\Phi)).
\end{equation}
\end{thm}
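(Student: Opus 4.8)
The plan is to obtain all three pairs of inequalities from a single device: apply the Rayleigh--Ritz theorem (Lemma \ref{RRThm}) to the Hermitian matrices $A(\Phi)$, $A(\Phi)^2$ and $A(\Phi)^3$ using the all-ones vector $\mathbf{j}$. The engine is the identity $A(\Phi)\mathbf{j}=\mathbf{d}^{\text{net}}$: the $i$-th entry of $A(\Phi)\mathbf{j}$ is $\sum_{e_{ij}\in E(\Phi)}\phi(e_{ij})=d_i^{\text{net}}$, directly from the definition of the net degree. Since $\mathbf{j}^{*}\mathbf{j}=n$, the quotient $\mathbf{j}^{*}A(\Phi)^{k}\mathbf{j}/n$ is exactly the quantity sitting in the middle of each display once $k=1,2,3$.

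First I would record the three moment computations. For $k=1$, $\mathbf{j}^{*}A(\Phi)\mathbf{j}=\mathbf{j}^{*}\mathbf{d}^{\text{net}}=\sum_{j}d_j^{\text{net}}$, which is real because pairing the two orientations of each edge gives $\phi(e_{ij})+\overline{\phi(e_{ij})}$; this is the middle term of \eqref{NeqAB1gain}. For $k=2$, using $A(\Phi)^{2}=A(\Phi)^{*}A(\Phi)$ gives $\mathbf{j}^{*}A(\Phi)^{2}\mathbf{j}=\|A(\Phi)\mathbf{j}\|^{2}=\sum_{j}|d_j^{\text{net}}|^{2}$, the radicand of \eqref{NeqAB2gain} (for signed and unsigned graphs the net degrees are real, so $|d_j^{\text{net}}|^{2}=(d_j^{\text{net}})^{2}$). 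For $k=3$, $\mathbf{j}^{*}A(\Phi)^{3}\mathbf{j}=(\mathbf{d}^{\text{net}})^{*}A(\Phi)\,\mathbf{d}^{\text{net}}=\sum_{i,j}\overline{d_i^{\text{net}}}\,\phi(e_{ij})\,d_j^{\text{net}}$, and collecting the two orientations of each edge collapses this to $2\sum_{e_{ij}\in E(\Phi)}\text{Re}[\overline{d_i^{\text{net}}}\,\phi(e_{ij})\,d_j^{\text{net}}]$, matching the radicand of \eqref{NeqAB3gain} once the net degrees are real.

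The second ingredient is a monotonicity observation. Expanding $\mathbf{j}$ in an orthonormal eigenbasis of $A(\Phi)$ shows that $\mathbf{j}^{*}A(\Phi)^{k}\mathbf{j}/n$ is a convex combination of the $k$-th powers of the eigenvalues, hence lies between $\min_i\lambda_i^{k}$ and $\max_i\lambda_i^{k}$. For the odd exponents $k=1$ and $k=3$ the map $t\mapsto t^{k}$ is increasing on all of $\mathbb{R}$, so these extremes are $\lambda_n^{k}$ and $\lambda_1^{k}$; applying the (monotone) $k$-th root then yields the genuinely two-sided bounds \eqref{NeqAB1gain} and \eqref{NeqAB3gain}. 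In fact \eqref{NeqAB1gain} is nothing more than Rayleigh--Ritz for $A(\Phi)$ with $\mathbf{x}=\mathbf{j}$, and \eqref{NeqAB3gain} is the same principle applied after noting that $t\mapsto t^{1/3}$ is monotone.

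The delicate case, and the one I expect to be the main obstacle, is the even exponent \eqref{NeqAB2gain}. Here $\mathbf{j}^{*}A(\Phi)^{2}\mathbf{j}/n$ is a convex combination of the $\lambda_i^{2}$, so the clean consequence is only $\sqrt{\tfrac1n\sum_j|d_j^{\text{net}}|^{2}}\le\rho(A(\Phi))=\max(\lambda_1,-\lambda_n)$, because $t\mapsto t^{2}$ is not monotone. I would therefore split \eqref{NeqAB2gain} into its two inequalities: the lower bound $\lambda_n\le\sqrt{\cdots}$ is free, since $A(\Phi)$ has zero diagonal and hence zero trace, forcing $\lambda_n\le 0\le\sqrt{\cdots}$; the content lies entirely in the upper bound $\sqrt{\cdots}\le\lambda_1$, which amounts to showing that the extreme eigenvalue in absolute value is $\lambda_1$ rather than $\lambda_n$. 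This is precisely where the nonnegative, irreducible structure of the underlying graph must enter, as in the Perron--Frobenius argument behind Theorem \ref{AdjSRub1}. I would aim to leverage $\lambda_1(A(\Phi)^{2})=\rho(A(\Phi))^{2}$ together with an identification of $\rho(A(\Phi))$ with $\lambda_1(A(\Phi))$; and I would test small unbalanced examples at the outset, since this identification can fail in general and the even-exponent bound may require an extra hypothesis or a restatement in terms of $\rho(A(\Phi))$.
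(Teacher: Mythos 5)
Your plan is the same as the paper's: the paper sets $M_k=\mathbf{j}^{\text{T}}A^k\mathbf{j}$, computes $M_1,M_2,M_3$ via $A\mathbf{j}=\mathbf{d}^{\text{net}}$, and then asserts that Lemma \ref{RRThm} makes the chain $(\lambda_n(A))^k\leq M_k/\mathbf{j}^{\text{T}}\mathbf{j}\leq(\lambda_1(A))^k$ ``clear'' for all $k\in\{1,2,3\}$. The difference is that you refuse to take that step for $k=2$, and you are right to refuse: the chain is legitimate only for odd $k$ (your monotonicity argument; equivalently, $\lambda_1(A^k)=\lambda_1(A)^k$ and $\lambda_n(A^k)=\lambda_n(A)^k$ when $k$ is odd), while for $k=2$ the eigenbasis expansion yields only $\sqrt{M_2/n}\leq\rho(A(\Phi))$. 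Your suspicion that this is a fatal obstruction, not a technicality, is confirmed by exactly the kind of small unbalanced example you propose testing: for $\Phi=(K_3,-1)$, the all-negative triangle (connected, unbalanced), the spectrum of $A(\Phi)$ is $\{1,1,-2\}$, so $\lambda_1(A(\Phi))=1$, while every $d_j^{\text{net}}=-2$, so the middle term of \eqref{NeqAB2gain} is $\sqrt{\tfrac{1}{3}\cdot 12}=2>1$. Hence the upper bound in \eqref{NeqAB2gain} is false, and the paper's proof of it breaks at precisely the step you isolated; Perron--Frobenius rescues that step for unsigned graphs (where $\rho=\lambda_1$, cf.\ Theorem \ref{AdjSRub1}) but is unavailable here because $A(\Phi)$ need not be entrywise nonnegative. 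Your proposed repair --- restate \eqref{NeqAB2gain} with $\rho(A(\Phi))$ in place of $\lambda_1(A(\Phi))$, the lower bound being trivial since the zero diagonal forces $\lambda_n(A(\Phi))\leq 0$ --- is the correct fix.

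A second point where your bookkeeping beats the paper's: you keep the conjugates, getting $M_2=\sum_j|d_j^{\text{net}}|^2$ and $M_3=2\sum_{e_{ij}\in E}\text{Re}\bigl[\overline{d_i^{\text{net}}}\,\phi(e_{ij})\,d_j^{\text{net}}\bigr]$, whereas the paper computes $(\mathbf{d}^{\text{net}})^{\text{T}}\mathbf{d}^{\text{net}}$ and $2\sum_{e_{ij}\in E}\text{Re}[\phi(e_{ij})]\,d_i^{\text{net}}d_j^{\text{net}}$, silently treating the net degrees as real. They need not be: for the path on three vertices with $\phi(e_{12})=i$ and $\phi(e_{23})=1$ one gets $\sum_j(d_j^{\text{net}})^2=-2i$, so the radicands of \eqref{NeqAB2gain} and \eqref{NeqAB3gain} as printed can fail to be real and those inequalities are then not even well formed. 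With your corrected radicands, \eqref{NeqAB1gain} and \eqref{NeqAB3gain} do hold by your odd-power argument. In short: your proposal proves everything in the theorem that is provable, and the part you could not prove is not a gap in your argument but an error in the paper.
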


\begin{proof}  The proof method is inspired by \cite[Theorem 3.2.1]{MR2571608}.
For brevity, we write $A$ for $A(\Phi)$. Let $M_k={\bf j}^{\text{T}} A^k {\bf j}$.  From Lemma \ref{RRThm} the following is clear: 
\[ (\lambda_{n}(A))^k \leq M_k/\mathbf{j}^{\text{T}}\mathbf{j} \leq (\lambda_{1}(A))^k.  \]
We will compute $M_1$, $M_2$ and $M_3$; thus, making inequalities \eqref{NeqAB1gain}, \eqref{NeqAB2gain} and \eqref{NeqAB3gain} true.
We will use the equation:
\begin{equation}\label{netdegeq}
A{\bf j}= \big(\sum_{j=1}^n \phi(e_{1j}),\ldots,\sum_{j=1}^n \phi(e_{nj})\big)= \big(\sum_{\mathfrak{g}\in\mathcal{W}} \mathfrak{g} d_1^{\mathfrak{g}},\ldots,\sum_{\mathfrak{g}\in\mathcal{W}} \mathfrak{g} d_n^{\mathfrak{g}}\big)=(d_1^{\text{net}},\ldots, d_n^{\text{net}}) = \mathbf{d}^{\text{net}}.
\end{equation}

Now we compute $M_1$, $M_2$ and $M_3$.
\begin{align*}
M_1 &= {\bf j}^{\text{T}} A {\bf j}={\bf j}^{\text{T}}\mathbf{d}^{\text{net}} = \sum_{j=1}^n d_j^{\text{net}},\\
M_2 &= {\bf j}^{\text{T}} A^2 {\bf j}=(\mathbf{d}^{\text{net}})^{\text{T}}\mathbf{d}^{\text{net}} = \sum_{j=1}^n (d_j^{\text{net}})^2,\\
M_3 &= {\bf j}^{\text{T}} A^3 {\bf j}=(\mathbf{d}^{\text{net}})^{\text{T}} A \mathbf{d}^{\text{net}}=\sum_{i=1}^n (d_i^{\text{net}})\sum_{j=1}^n \phi(e_{ij})(d_j^{\text{net}})=2 \sum_{e_{ij}\in E(\Phi)} \text{Re}[\phi(e_{ij})]d_i^{\text{net}}d_j^{\text{net}}.\qedhere
\end{align*}
\end{proof}
\section{Eigenvalues of the Laplacian Matrix}
The next lemma implies that a switching class has a Laplacian spectrum.  The following is immediate from Lemma \ref{HALSwitched}.

\begin{lem}\label{simLSwitch} Let $\Phi_1=(\Gamma,\phi_1)$ and $\Phi_2=(\Gamma,\phi_2)$ both be $\mathbb{T}$-gain graphs. If $\Phi_1\sim \Phi_2$, then $L(\Phi_1)$ and $L(\Phi_2)$ have the same spectrum.
\end{lem}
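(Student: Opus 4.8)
The plan is to reduce the statement to the fact that a unitary similarity preserves the spectrum. Since $\Phi_1 \sim \Phi_2$, by the definition of switching equivalence there is a switching function $\zeta:V\rightarrow\mathbb{T}$ with $\Phi_2=\Phi_1^{\zeta}$. First I would invoke the third bullet of Lemma \ref{HALSwitched} to write
\[
L(\Phi_2)=L(\Phi_1^{\zeta})=D(\zeta)^*L(\Phi_1)D(\zeta).
\]

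The key step is to observe that $D(\zeta)$ is unitary. Its diagonal entries are the values $\zeta(v_i)\in\mathbb{T}$, so each satisfies $\overline{\zeta(v_i)}\,\zeta(v_i)=|\zeta(v_i)|^2=1$; hence $D(\zeta)^*D(\zeta)=I$ and $D(\zeta)^*=D(\zeta)^{-1}$. Therefore the identity above reads $L(\Phi_2)=D(\zeta)^{-1}L(\Phi_1)D(\zeta)$, which exhibits $L(\Phi_1)$ and $L(\Phi_2)$ as similar matrices. Similar matrices share the same characteristic polynomial, and hence the same multiset of eigenvalues, so $\sigma(L(\Phi_1))=\sigma(L(\Phi_2))$, as desired. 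This mirrors exactly the (omitted) proof of Lemma \ref{simASwitch} for the adjacency matrix, with the third bullet of Lemma \ref{HALSwitched} replacing the second.

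There is essentially no obstacle here; the argument is immediate once Lemma \ref{HALSwitched} is in hand. The only point deserving a moment's care is the verification that $D(\zeta)$ is unitary, which is precisely where the hypothesis that $\zeta$ maps into $\mathbb{T}$ (rather than $D(\zeta)$ being an arbitrary invertible diagonal matrix) is used: for a general diagonal $D$ the relation $L(\Phi_2)=D^*L(\Phi_1)D$ is a congruence but not a similarity, and congruence need not preserve eigenvalues. Because $\zeta$ takes unit-modulus values, the congruence coincides with a similarity, and the spectrum is preserved.
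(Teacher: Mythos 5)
Your proof is correct and follows exactly the route the paper intends: the paper states the lemma is immediate from Lemma \ref{HALSwitched}, and your argument simply fills in the (obvious) details that $D(\zeta)$ is unitary, so the congruence $L(\Phi_2)=D(\zeta)^*L(\Phi_1)D(\zeta)$ is in fact a similarity and thus preserves the spectrum. No issues.
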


\begin{lem}\label{BalancedSpectrumL} If $\Phi=(\Gamma,\phi)$ is a balanced $\mathbb{T}$-gain graph, then $L(\Phi)$ and $L(\Gamma)$ have the same spectrum.
\end{lem}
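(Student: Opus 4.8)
The plan is to mirror, almost verbatim, the argument used for the adjacency matrix in Lemma \ref{BalancedSpectrumA}, replacing the adjacency switching invariance by its Laplacian counterpart. The whole statement reduces to two previously established facts chained together: a balance characterization and invariance of the Laplacian spectrum under switching. First I would invoke Lemma \ref{HBGen}: since $\Phi$ is balanced, condition (1) holds, hence so does condition (2), giving $\Phi \sim (\Gamma,1)$. That is, $\Phi$ is switching equivalent to the all-neutral gain graph on the same underlying graph $\Gamma$.

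Next I would apply Lemma \ref{simLSwitch} to the switching-equivalent pair $\Phi$ and $(\Gamma,1)$. That lemma guarantees that switching-equivalent $\mathbb{T}$-gain graphs have Laplacian matrices with identical spectra, so $L(\Phi)$ and $L(\Gamma,1)$ have the same spectrum. The only remaining point is to identify $L(\Gamma,1)$ with the ordinary Laplacian $L(\Gamma)$ of the underlying graph. This is immediate from the definitions: the adjacency matrix of $(\Gamma,1)$ has $(i,j)$-entry equal to $\phi(e_{ij})=1$ whenever $v_i$ is adjacent to $v_j$ and $0$ otherwise, which is precisely $A(\Gamma)$; meanwhile the degree matrix $D(\Gamma)$ depends only on $\Gamma$ and not on the gains. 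Hence $L(\Gamma,1)=D(\Gamma)-A(\Gamma,1)=D(\Gamma)-A(\Gamma)=L(\Gamma)$, and combining this with the previous step shows that $L(\Phi)$ and $L(\Gamma)$ share the same spectrum, as claimed.

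I expect essentially no obstacle here: the result is the exact Laplacian analogue of the balanced adjacency statement, and every ingredient has already been proved in the excerpt. The one step requiring a moment of care is the final bookkeeping identification $L(\Gamma,1)=L(\Gamma)$, ensuring that the ``gain'' Laplacian of the neutral gain graph really coincides with the classical unsigned Laplacian; but since both the degree matrix and the adjacency entries reduce to their unsigned values when all gains are $1$, this is entirely routine and completes the proof in a single line.
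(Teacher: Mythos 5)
Your proposal is correct and follows exactly the paper's own argument: apply Lemma \ref{HBGen} to get $\Phi \sim (\Gamma,1)$, then invoke Lemma \ref{simLSwitch}. The only difference is that you make explicit the routine identification $L(\Gamma,1)=L(\Gamma)$, which the paper leaves implicit.
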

\begin{proof}  By Lemma \ref{HBGen}, $\Phi\sim(\Gamma,1)$.  Thus, the result follows from Lemma \ref{simLSwitch}. 
\end{proof}

The following is a simplification of the quadratic form $\mathbf{x}^{*} L(\Phi) \mathbf{x}$ which will be used to obtain eigenvalue bounds.

\begin{lem}\label{GGQuadraticForm}  Let $\Phi = (\Gamma,\phi)$ be a $\mathbb{T}$-gain graph. Suppose $\mathbf{x}=(x_1,x_2,\ldots,x_n)\in \mathbb{C}^n$.  Then
\[ \mathbf{x}^{*} L(\Phi) \mathbf{x} = \sum_{e_{ij}\in E(\Phi)} |x_i - \phi(e_{ij}) x_j |^2.\]
\end{lem}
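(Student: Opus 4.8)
The plan is to exploit the factorization $L(\Phi)=\Eta(\Phi)\Eta(\Phi)^{*}$ furnished by Lemma~\ref{LaplacianEtaEq}, which converts the quadratic form into a squared Euclidean norm. Setting $\mathbf{y}:=\Eta(\Phi)^{*}\mathbf{x}\in\mathbb{C}^{m}$, a vector indexed by the edges of $\Gamma$, I would first record
\[
\mathbf{x}^{*}L(\Phi)\mathbf{x}=\mathbf{x}^{*}\Eta(\Phi)\Eta(\Phi)^{*}\mathbf{x}=\big(\Eta(\Phi)^{*}\mathbf{x}\big)^{*}\big(\Eta(\Phi)^{*}\mathbf{x}\big)=\mathbf{y}^{*}\mathbf{y}=\sum_{e\in E}|y_{e}|^{2},
\]
so the entire problem reduces to identifying the single coordinate $y_{e}$ attached to each edge.

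Next I would compute $y_{e}$. Since $y_{e}=\sum_{v\in V}\overline{\eta_{ve}}\,x_{v}$ and the only nonzero incidence entries in the column of $e=e_{ij}$ occur at its two endpoints, this collapses to $y_{e}=\overline{\eta_{v_{i}e}}\,x_{i}+\overline{\eta_{v_{j}e}}\,x_{j}$. Because $\mathbf{x}^{*}L(\Phi)\mathbf{x}$ depends only on $L(\Phi)$ and not on the particular (non-unique) incidence matrix chosen, I am free to fix a convenient representative: taking $\eta_{v_{i}e}=1$ forces $\eta_{v_{j}e}=-\phi(e_{ij})^{-1}=-\overline{\phi(e_{ij})}$ through the defining relation $\eta_{v_{i}e}=-\eta_{v_{j}e}\phi(e_{ij})$ together with $\phi(e_{ij})^{-1}=\overline{\phi(e_{ij})}$. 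With this choice $y_{e}=x_{i}-\phi(e_{ij})x_{j}$, so $|y_{e}|^{2}=|x_{i}-\phi(e_{ij})x_{j}|^{2}$, and summing over all edges gives the asserted identity.

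The only point demanding genuine care is the bookkeeping with conjugates and the non-uniqueness of $\Eta(\Phi)$; in particular one should confirm that the edge summand $|x_{i}-\phi(e_{ij})x_{j}|^{2}$ is well defined independent of which endpoint is called $v_{i}$. This holds because reversing the orientation replaces the summand by $|x_{j}-\phi(e_{ji})x_{i}|^{2}=|\phi(e_{ij})^{-1}|^{2}\,|x_{i}-\phi(e_{ij})x_{j}|^{2}=|x_{i}-\phi(e_{ij})x_{j}|^{2}$, using $|\phi(e_{ij})|=1$, so the sum over the unoriented edge set is unambiguous. As a cross-check that avoids incidence matrices altogether, I would also expand $\mathbf{x}^{*}L(\Phi)\mathbf{x}=\mathbf{x}^{*}D(\Gamma)\mathbf{x}-\mathbf{x}^{*}A(\Phi)\mathbf{x}$, rewrite $\mathbf{x}^{*}D(\Gamma)\mathbf{x}=\sum_{e_{ij}\in E}(|x_{i}|^{2}+|x_{j}|^{2})$ and pair each edge's two Hermitian-conjugate off-diagonal contributions to obtain $\mathbf{x}^{*}A(\Phi)\mathbf{x}=\sum_{e_{ij}\in E}2\,\text{Re}[\overline{x_{i}}\,\phi(e_{ij})x_{j}]$, then match this against $|x_{i}-\phi(e_{ij})x_{j}|^{2}=|x_{i}|^{2}+|x_{j}|^{2}-2\,\text{Re}[\overline{x_{i}}\,\phi(e_{ij})x_{j}]$. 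Either route is short once the factorization (or the edge-pairing) is in place; I expect the incidence-matrix argument to be the cleaner of the two.
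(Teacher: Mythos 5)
Your proof is correct. The paper states this lemma without any proof, so there is no argument to compare against; your route via the factorization $L(\Phi)=\Eta(\Phi)\Eta(\Phi)^{*}$ from Lemma \ref{LaplacianEtaEq}, using the normalization $\eta_{v_i e}=1$ (which the paper itself mentions as an admissible choice of incidence matrix), is exactly the argument the surrounding text sets up, and both your endpoint-relabeling check and your direct expansion of $\mathbf{x}^{*}(D(\Gamma)-A(\Phi))\mathbf{x}$ are sound.
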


The following theorem establishes a relationship between the Laplacian spectral radius of a $\mathbb{T}$-gain graph and the signless Laplacian spectral radius of an unsigned graph.  This generalizes a signed graphic version that appears in \cite{MR1950410}, which generalizes the unsigned graphic relation: $\lambda_1(L(\Gamma))\leq\lambda_1(Q(\Gamma))$.  

\begin{thm}\label{LapSLCompThm}
Let $\Phi = (\Gamma,\phi)$ be a connected $\mathbb{T}$-gain graph.  Then 
\[\lambda_{1}(L(\Phi))\leq \lambda_{1}(L(\Gamma,-1))=\lambda_1(Q(\Gamma)).\]
Furthermore, equality holds if and only if $\Phi \sim (\Gamma,-1)$.
\end{thm}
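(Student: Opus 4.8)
The plan is to prove the inequality by a Rayleigh--Ritz argument built on the quadratic-form expression of Lemma \ref{GGQuadraticForm}, and then to recover the equality case by tracing back exactly where the chain of inequalities becomes tight. First I would let $\mathbf{x}=(x_1,\dots,x_n)$ be a unit eigenvector for $\lambda_1(L(\Phi))$ and write, via Lemma \ref{GGQuadraticForm},
\[ \lambda_1(L(\Phi)) = \mathbf{x}^* L(\Phi)\mathbf{x} = \sum_{e_{ij}\in E(\Phi)} |x_i - \phi(e_{ij})x_j|^2. \]
Since every gain satisfies $|\phi(e_{ij})|=1$, the triangle inequality gives $|x_i - \phi(e_{ij})x_j| \le |x_i| + |x_j|$. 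Setting $\mathbf{y} := (|x_1|,\dots,|x_n|)$, which is again a unit vector, and applying Lemma \ref{GGQuadraticForm} to $(\Gamma,-1)$ (whose quadratic form at the nonnegative real vector $\mathbf{y}$ is $\sum_{e_{ij}\in E}(|x_i|+|x_j|)^2$), I obtain
\[ \lambda_1(L(\Phi)) \le \sum_{e_{ij}\in E} (|x_i|+|x_j|)^2 = \mathbf{y}^* Q(\Gamma)\mathbf{y} \le \lambda_1(Q(\Gamma)), \]
where the last step is Rayleigh--Ritz (Lemma \ref{RRThm}). This establishes the stated bound.

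The ``if'' half of the equality statement is immediate: if $\Phi \sim (\Gamma,-1)$ then by Lemma \ref{simLSwitch} the matrices $L(\Phi)$ and $L(\Gamma,-1)$ are cospectral, so their largest eigenvalues coincide.

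The ``only if'' direction is where I expect the real work to lie. Assuming equality, both inequalities in the chain above must be tight. Equality in the Rayleigh--Ritz step forces $\mathbf{y}$ to be an eigenvector of $Q(\Gamma)$ for $\lambda_1(Q(\Gamma))$. Since $\Gamma$ is connected, $Q(\Gamma)$ is an irreducible nonnegative matrix, and because $Q(\Gamma)$ is positive semidefinite we have $\lambda_1(Q(\Gamma)) = \rho(Q(\Gamma))$; hence the Perron--Frobenius theory (Lemma \ref{PFThm}) applies and the nonnegative eigenvector $\mathbf{y}$ must in fact be strictly positive, so every $|x_i|>0$. Equality in the triangle inequality on each edge then forces $x_i$ and $\phi(e_{ij})x_j$ to be antiparallel nonzero complex numbers, i.e. $x_i = -\tfrac{|x_i|}{|x_j|}\,\phi(e_{ij})\,x_j$ for every $e_{ij}\in E$.

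To finish, I would define the switching function $\zeta(v_i) := x_i/|x_i|\in\mathbb{T}$; dividing the last relation by $|x_i|$ gives $\zeta(v_i) = -\phi(e_{ij})\zeta(v_j)$, which rearranges to $\phi^{\zeta}(e_{ij}) = \zeta(v_i)^{-1}\phi(e_{ij})\zeta(v_j) = -1$ for every edge. Thus $\Phi^{\zeta} = (\Gamma,-1)$ and so $\Phi \sim (\Gamma,-1)$. The two delicate points to get right are the characterization of when the complex triangle inequality is an equality (antiparallelism, which is clean precisely because the Perron--Frobenius step rules out any $x_i=0$) and the bookkeeping that converts the resulting phase identity into a valid switching function.
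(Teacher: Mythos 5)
Your proof is correct and follows essentially the same route as the paper: the quadratic form of Lemma \ref{GGQuadraticForm} plus the termwise triangle inequality for the bound, and Perron--Frobenius positivity plus the antiparallelism/switching-function construction for the equality case. Your one refinement — evaluating the signless Laplacian quadratic form directly at the nonnegative unit vector $(|x_1|,\dots,|x_n|)$ and invoking Rayleigh--Ritz, rather than the paper's detour through $\max_{y^*y=1}\sum_{e_{ij}\in E}(|y_i|+|y_j|)^2$ — is a mild streamlining that defers Perron--Frobenius to the equality case only, but it does not change the substance of the argument.
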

\begin{proof}
The proof is similar to the signed graphic proof in \cite[Lemma 3.1]{MR1950410}.  

Let $\mathbf{x}=(x_1,x_2,\ldots,x_n)^{\text{T}}\in \mathbb{C}^n$ be a unit eigenvector of $L(\Phi)$ with corresponding eigenvalue $\lambda_{1}(L(\Phi))$.  By Lemma \ref{GGQuadraticForm}:

\[ \lambda_{1}(L(\Phi)) = \mathbf{x}^* L(\Phi) \mathbf{x} =  \sum_{e_{ij}\in E(\Phi)} |x_i - \phi(e_{ij}) x_j |^2
\leq \sum_{e_{ij}\in E(\Phi)} (|x_i| + |x_j|)^2 \leq \max_{y^*y=1} \sum_{e_{ij}\in E(\Phi)} (|y_i| + |y_j|)^2. \]
Since $L(\Gamma,-1)$ is nonnegative, and $\Gamma$ is connected, $L(\Gamma,-1)$ is irreducible.  Hence, by Lemma \ref{PFThm}, there is an eigenvector $\mathbf{y}=(y_1,\ldots,y_n)\in \mathbb{R}^n$ of $L(\Gamma,-1)$, with corresponding eigenvalue $\lambda_1(L(\Gamma,-1))$, where $y_i>0$ for every $i\in \{1,\ldots,n\}$.  Therefore, $|y_k|=y_k$ for all $k\in \{1,\ldots,n \}$. Finally, by Lemma \ref{RRThm}, $\displaystyle\max_{y^*y=1} \sum_{e_{ij}\in E(\Phi)} (|y_i| + |y_j|)^2=\lambda_{1}(L(\Gamma,-1))$.

If $\Phi \sim (\Gamma,-1)$, then $\lambda_{1}(L(\Phi))=\lambda_{1}(L(\Gamma,-1))$ by Lemma \ref{simLSwitch}.

Now suppose that $\lambda_1(L(\Gamma,-1))=\lambda_1(L(\Phi))$.  Then $|x_i|^2+2|x_i||x_j|+|x_j|^2 = |x_i - \phi(e_{ij}) x_j |^2= |x_i|^2 - 2\cdot \text{Re}(\bar{x}_i x_j \phi(e_{ij}))+ |x_j|^2$. Therefore, $- \text{Re}(\bar{x}_i x_j \phi(e_{ij})) = |x_i||x_j|$.  Also, $(x_1,\ldots,x_n)$ is an eigenvector of $L(\Gamma,-1)$ with corresponding eigenvalue $\lambda_1(L(\Gamma,-1))$.  From Lemma \ref{PFThm}, $x_l\neq0$ for every $l\in\{1,\ldots,n\}$.  Let $x_i=|x_i|e^{i \theta_i}$, $x_j=|x_j|e^{i \theta_j}$ and $\phi(e_{ij})=e^{i\theta}$.  Thus, $- \text{Re}(|x_i||x_j|e^{-i \theta_i}e^{i \theta_j} e^{i\theta}) = |x_i||x_j|$.  That is, $\text{Re}(e^{i(-\theta_i+ \theta_j+\theta)}) = -1$.  So $e^{i(-\theta_i+ \theta_j+\theta)} = -1$.  Finally, by substitution, $\frac{\bar{x}_i x_j}{|x_i||x_j|}\phi(e_{ij})=-1$; that is, 
$\frac{x_i \bar{x}_j}{|x_i||x_j|}=-\phi(e_{ij})$.  

Let $\theta:V\rightarrow \mathbb{T}$ be defined as $\theta(v_i)=\bar{x}_i/|x_i|$, for all $v_i\in V$.  Since $\bar{x}_i/|x_i|\in \mathbb{T}$ for every $i\in\{1,\ldots,n\}$, $\theta$ is a potential function for $-\overline{\phi}$.  Therefore, $(\Gamma,-\overline{\phi})$ is balanced by Lemma \ref{HBGen}.
\end{proof}

Theorem \ref{LapSLCompThm} says that any known upper bound for the signless Laplacian spectral radius of an unsigned graph is also an upper bound for the Laplacian spectral radius of a $\mathbb{T}$-gain graph.  Moreover, equality holds if and only if the gain graph is switching equivalent to $(\Gamma,-1)$.  Consequently, we can state the following corollary, which includes many known upper bounds for the signless Laplacian of an unsigned graph.  The organization of the following bounds is inspired by \cite{MR2588119}.  References are provided for each bound, indicating where these upper bounds are known for the signless Laplacian spectral radius.

\begin{cor} Let $\Phi=(\Gamma,\phi)$ be a connected $\mathbb{T}$-gain graph.  Then the following upper bounds on the Laplacian spectral radius are valid:

\noindent (1) Upper bounds depending on $d_i$ and $m_i$:
\begin{align*}
\lambda_1(L(\Phi))&\leq 2\Delta,\text{\qquad\cite{MR2312332}}\\
\lambda_1(L(\Phi))&\leq \max_i\{d_i+m_i\},\text{\qquad\cite{MR2015532}}\\
\lambda_1(L(\Phi))&\leq \max_i\{d_i+\sqrt{d_im_i}\},\text{\qquad\cite{MR2588119}}\\
\lambda_1(L(\Phi))&\leq \max_i\{\sqrt{2d_i(d_i+m_i)}\}, \text{\qquad\cite{MR2588119}}\\
\lambda_1(L(\Phi))&\leq \max_i\left\{\frac{d_i+\sqrt{(d_i)^2+8(d_im_i)}}{2}\right\}. \text{\qquad\cite{MR2588119}}
\end{align*}

\noindent (2) Upper bounds depending on $d_i$, $d_j$, $m_i$ and $m_j$:
\begin{align*}
\lambda_1(L(\Phi)) &\leq \max_{e_{ij}\in E}\{d_i+d_j \},\text{\qquad\cite{MR2312332}}\\
\lambda_1(L(\Phi)) &\leq \max_{e_{ij}\in E}\left\{\frac{d_i(d_i+m_i)+d_j(d_j+m_j)}{d_i+d_j} \right\},\text{\qquad\cite{MR1653547, MR2588119}}\\
\lambda_1(L(\Phi)) &\leq \max_{e_{ij}\in E}\left\{\sqrt{d_i(d_i+m_i)+d_j(d_j+m_j)}\right\}, \text{\qquad\cite{MR2015534, MR2588119}}\\
\lambda_1(L(\Phi)) &\leq \max_{e_{ij}\in E}\left\{2+\sqrt{d_i(d_i+m_i-4)+d_j(d_j+m_j-4)+4} \right\}, \text{\qquad\cite{MR2015534, MR2588119}}\\
\lambda_1(L(\Phi)) &\leq \max_{e_{ij}\in E}\left\{\frac{d_i+d_j+\sqrt{(d_i-d_j)^2+4m_im_j}}{2} \right\}. \text{\qquad\cite{MR2031534}}
\end{align*}
Furthermore, equality holds if and only if $\Phi \sim (\Gamma,-1)$.
\end{cor}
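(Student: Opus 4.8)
The plan is to derive the entire corollary as a direct consequence of Theorem \ref{LapSLCompThm}. That theorem already supplies the only nontrivial inequality, namely $\lambda_1(L(\Phi)) \le \lambda_1(L(\Gamma,-1)) = \lambda_1(Q(\Gamma))$, valid for every connected $\mathbb{T}$-gain graph. Each right-hand side appearing in the corollary is, by the reference attached to it, a previously established upper bound for the signless Laplacian spectral radius $\lambda_1(Q(\Gamma))$ of the underlying unsigned graph $\Gamma$. So the work reduces to transitivity: once I know $\lambda_1(Q(\Gamma)) \le B$ for a given listed bound $B$, I immediately obtain $\lambda_1(L(\Phi)) \le \lambda_1(Q(\Gamma)) \le B$.

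First I would fix notation $B$ for a generic one of the displayed right-hand sides and record the literature fact $\lambda_1(Q(\Gamma)) \le B$ from the cited paper. For the bounds in group (1) these depend only on the degrees $d_i$ and the average $2$-degrees $m_i$, and for group (2) on $d_i,d_j,m_i,m_j$ ranging over edges $e_{ij}\in E$. No recomputation of these classical signless Laplacian estimates is needed, since $d_i$, $m_i$, and $\Delta$ are invariants of $\Gamma$ alone and are unaffected by the gain function $\phi$. Concatenating each such inequality with the comparison from Theorem \ref{LapSLCompThm} then yields every displayed bound on $\lambda_1(L(\Phi))$ at once.

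For the equality clause I would argue as follows. Suppose one of the bounds is attained, say $\lambda_1(L(\Phi)) = B$. Then the chain $\lambda_1(L(\Phi)) \le \lambda_1(Q(\Gamma)) \le B$ must collapse, forcing in particular $\lambda_1(L(\Phi)) = \lambda_1(Q(\Gamma))$. By the equality characterization already proved in Theorem \ref{LapSLCompThm}, this is equivalent to $\Phi \sim (\Gamma,-1)$. Conversely, if $\Phi \sim (\Gamma,-1)$, then $L(\Phi)$ and $L(\Gamma,-1)=Q(\Gamma)$ share the same spectrum by Lemma \ref{simLSwitch}, so $\lambda_1(L(\Phi)) = \lambda_1(Q(\Gamma))$, and the gain-graph bound is then attained exactly when the underlying signless Laplacian bound is attained for $\Gamma$.

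The main obstacle is the precise reading of the equality statement. The forward implication (attainment implies $\Phi \sim (\Gamma,-1)$) is immediate from Theorem \ref{LapSLCompThm}, but the converse is genuinely an equivalence only after restricting to underlying graphs $\Gamma$ for which the chosen signless Laplacian estimate is itself sharp; otherwise one has $\lambda_1(L(\Phi)) = \lambda_1(Q(\Gamma)) < B$ even though $\Phi \sim (\Gamma,-1)$. I would therefore phrase the equality clause as pertaining to the comparison $\lambda_1(L(\Phi)) \le \lambda_1(Q(\Gamma))$ that underlies each bound, and note that the individual sharpness conditions for the signless Laplacian bounds are inherited verbatim from the cited references.
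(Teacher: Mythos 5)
Your proposal is correct and takes essentially the same approach as the paper: the corollary is presented there as an immediate consequence of Theorem \ref{LapSLCompThm} chained with the cited signless Laplacian bounds for $\lambda_1(Q(\Gamma))$, with no further argument given. Your caveat about the equality clause---that $\Phi \sim (\Gamma,-1)$ alone forces equality only when the underlying signless Laplacian estimate is itself attained for $\Gamma$---is a fair and correct refinement of the paper's looser ``if and only if'' phrasing.
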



\begin{figure}[h!]
    \includegraphics[width=0.5\textwidth]{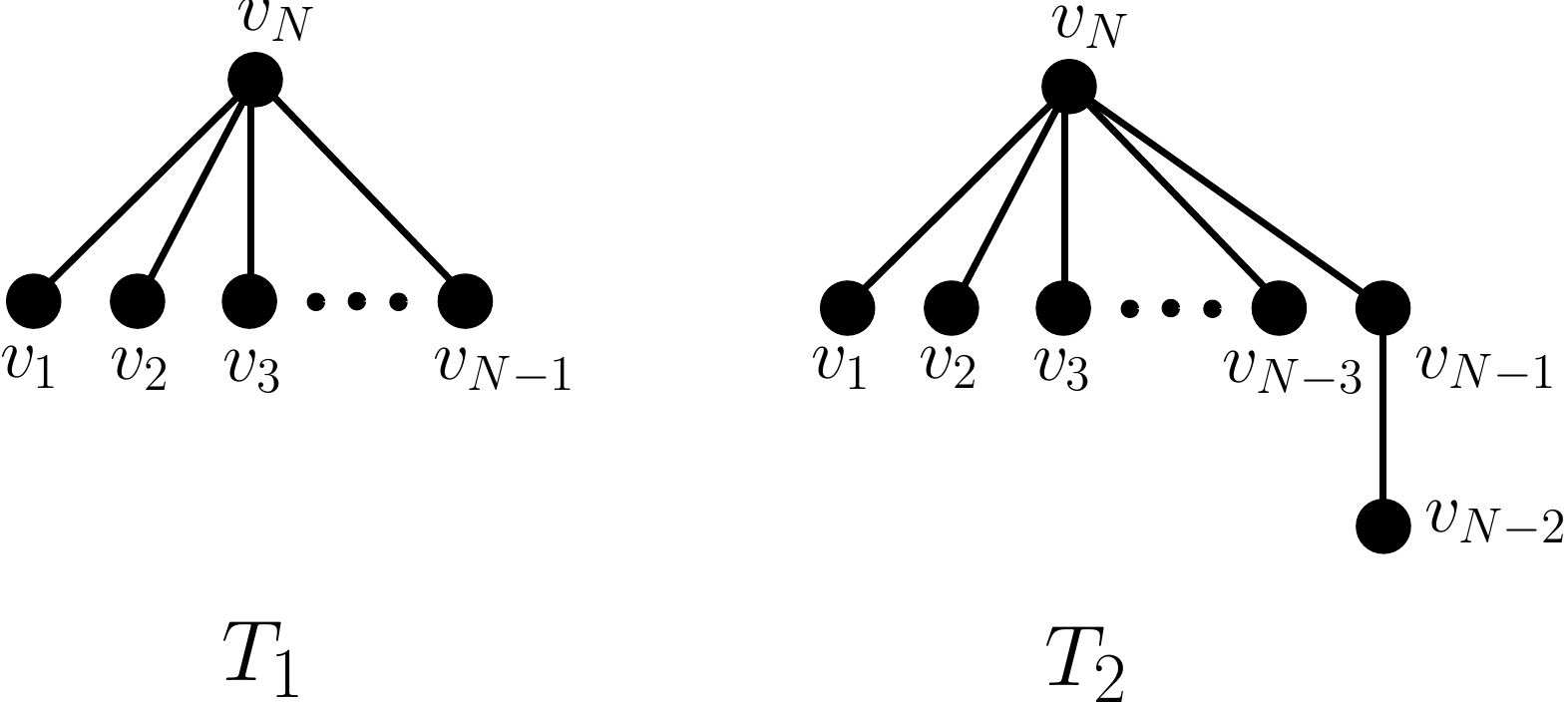}
\caption{Trees considered in Lemma \ref{tree12lapsr}.}\label{treelaplemma}
\end{figure}

\begin{lem}\label{tree12lapsr}
Consider the trees $T_1$ (with $N\geq 3$) and $T_2$ (with $N\geq 4$) in Figure \ref{treelaplemma}.  The following is true:
\begin{align*}
\lambda_1(L(T_1)) &= \Delta+1,\\
\lambda_1(L(T_2)) &> \Delta+1.
\end{align*}
\end{lem}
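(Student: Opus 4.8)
The plan is to treat $T_1$ and $T_2$ as ordinary (all-neutral) graphs, so that $L(T_i)=D(T_i)-A(T_i)$ is the usual combinatorial Laplacian and, by Lemma~\ref{GGQuadraticForm} specialized to $\phi\equiv 1$, its quadratic form is $\mathbf{x}^{*}L(T_i)\mathbf{x}=\sum_{e_{ij}\in E(T_i)}|x_i-x_j|^2$. Reading the figure, $T_1$ should be the star $K_{1,N-1}$ with a single hub $v_0$ of maximum degree $\Delta=N-1$ and $N-1$ pendant leaves, while $T_2$ is the non-star tree obtained by hanging further vertices off the leaves of such a star; in either case I write $v_0$ for a vertex of maximum degree $\Delta$ and $v_1,\dots,v_\Delta$ for its neighbors. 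Everything reduces to one explicit eigenvector computation plus one Rayleigh-quotient estimate.

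For $T_1$ I would exhibit the full Laplacian spectrum directly. The all-ones vector gives eigenvalue $0$; any vector supported on the leaves with zero coordinate sum is fixed by $L(T_1)$, giving eigenvalue $1$ with multiplicity $N-2$; and the vector $\mathbf{x}^{(0)}$ equal to $\Delta$ on $v_0$ and $-1$ on each leaf satisfies $L(T_1)\mathbf{x}^{(0)}=(\Delta+1)\mathbf{x}^{(0)}$, by a one-line check using $\deg v_0=\Delta$. Since $1+(N-2)+1=N$ accounts for all eigenvalues and $\Delta+1$ is the largest element of $\{0,1,\Delta+1\}$, this yields $\lambda_1(L(T_1))=\Delta+1$.

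For $T_2$ I would produce a test vector whose Rayleigh quotient strictly exceeds $\Delta+1$ and invoke Lemma~\ref{RRThm}. Take the same $\mathbf{x}^{(0)}$ (value $\Delta$ on $v_0$, value $-1$ on each of $v_1,\dots,v_\Delta$, and $0$ on every remaining vertex of $T_2$). Using the edge expression of the quadratic form, each of the $\Delta$ edges at $v_0$ contributes $(\Delta-(-1))^2=(\Delta+1)^2$; every other edge contributes $0$ when both endpoints lie outside the closed neighborhood of $v_0$, and contributes $(-1-0)^2=1$ for each edge joining some neighbor $v_k$ to a vertex outside. Hence $(\mathbf{x}^{(0)})^{*}L(T_2)\mathbf{x}^{(0)}=\Delta(\Delta+1)^2+s$, where $s\ge 0$ counts these surplus edges, while $(\mathbf{x}^{(0)})^{*}\mathbf{x}^{(0)}=\Delta^2+\Delta=\Delta(\Delta+1)$. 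Therefore, with $R(\mathbf{x})=(\mathbf{x}^{*}L(T_2)\mathbf{x})/(\mathbf{x}^{*}\mathbf{x})$, Lemma~\ref{RRThm} gives $\lambda_1(L(T_2))\ge R(\mathbf{x}^{(0)})=(\Delta+1)+s/(\Delta(\Delta+1))$.

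The crux is to confirm $s>0$. Because $T_2$ is a tree with $N>\Delta+1$ vertices and $v_0$ already realizes the maximum degree $\Delta$, no further vertex can be adjacent to $v_0$; by connectedness some neighbor $v_k$ must then have a neighbor $u\notin\{v_0,\dots,v_\Delta\}$ (it cannot be another $v_j$, since $v_0v_kv_j$ would be a triangle in a tree). That edge contributes $1$, so $s\ge 1$ and the quotient lands strictly above $\Delta+1$. The main points to get right are thus the eigen-equation $L(T_1)\mathbf{x}^{(0)}=(\Delta+1)\mathbf{x}^{(0)}$ together with the fact that $\Delta+1$ really is the top of the star's spectrum (which the full decomposition settles), and the verification that $T_2$, being strictly larger than a star, always supplies at least one surplus edge; matching $\mathbf{x}^{(0)}$ to the precise labelling drawn in Figure~\ref{treelaplemma} is the only remaining bookkeeping.
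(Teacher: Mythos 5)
Your proof is correct, but it takes a genuinely different route from the paper's. The paper argues by direct computation of characteristic polynomials: row-reducing $\det(\lambda I - L(T_1))$ gives $p_{L(T_1)}(\lambda)=\lambda(\lambda-1)^{N-2}(\lambda-N)$, hence $\lambda_1(L(T_1))=N=\Delta+1$; for $T_2$ (in the figure, a hub of degree $\Delta=N-2$ carrying $N-3$ pendant edges and one path of length two) it computes $p_{L(T_2)}(\lambda)=\lambda(\lambda-1)^{N-4}\bigl(\lambda^3-(N+2)\lambda^2-(2-3N)\lambda-N\bigr)$ and observes that $p_{L(T_2)}(\Delta+1)<0$, so a root lies strictly beyond $\Delta+1$. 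You instead exhibit the full eigendecomposition of the star directly (eigenvalues $0$, $1$ with multiplicity $N-2$, and $\Delta+1$, which together exhaust the spectrum), and for $T_2$ you reuse the star's top eigenvector, extended by zeros, as a Rayleigh test vector via Lemma \ref{RRThm}, reducing everything to the existence of a single ``surplus'' edge leaving the closed neighborhood of $v_0$; your triangle-freeness remark and the connectedness argument for $s\geq 1$ are both sound. What the paper's computation buys is exact spectral information about the two specific trees; what yours buys is robustness and generality: the $T_2$ argument never uses the precise shape drawn in the figure, only that $T_2$ is a connected tree whose max-degree vertex $v_0$ has fewer than $N-1$ neighbors, so it actually proves $\lambda_1(L(T))>\Delta+1$ for every non-star tree, with the quantitative bound $\lambda_1\geq \Delta+1+s/(\Delta(\Delta+1))$ — which is precisely the form in which the lemma is deployed in the theorem that follows it. The only bookkeeping you rightly flag is confirming that the figure's $T_2$ fits your hypotheses: it does, since it has $\Delta+2>\Delta+1$ vertices.
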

\begin{proof}
The characteristic polynomial of $L(T_1)$ is 

\begin{align*}
p_{L(T_1)}(\lambda)&=\left|\begin{matrix} \lambda-1 & 0 & 0  & \cdots &0 & 1 \\ 0 & \lambda-1 & 0 & \cdots &0 & 1  \\ 0 & 0 & \lambda-1  & \ddots &\vdots & \vdots \\ \vdots & \vdots &  \ddots  & \ddots & 0 & 1 \\ 0 & 0 &  0  & \cdots & \lambda-1 & 1 \\ 1 & 1 &  1  & \cdots & 1 & \lambda-(N-1) \\ \end{matrix}\right|
\\
&=\left|\begin{matrix} \lambda-1 & 0 & 0  & \cdots &0 & 1 \\ 0 & \lambda-1 & 0 & \cdots &0 & 1  \\ 0 & 0 & \lambda-1  & \ddots &\vdots & \vdots \\ \vdots & \vdots &  \ddots  & \ddots & 0 & 1 \\ 0 & 0 &  0  & \cdots & \lambda-1 & 1 \\ 0 & 0 &  0  & \cdots & 0 & \lambda-(N-1)-\frac{N-1}{\lambda-1} \\ \end{matrix}\right|\\
&= \lambda(\lambda-1)^{N-2}(\lambda-N).
\end{align*}
Therefore, $\lambda_1(L(T_1))=N=\Delta+1$.  

The characteristic polynomial of $L(T_2)$ is:
\begin{align*}
p_{L(T_2)}(\lambda)
&=\left|\begin{matrix} \lambda-1 & 0 & \cdots &0 &0 &0 & 1 \\ 0 & \ddots & \ddots & \ddots &0&0 & 1  \\ \vdots & \ddots & \ddots & \ddots &\vdots  & \vdots\\ 0 &  \cdots&0 &  \lambda-1 & 0 & 0 & 1 \\ 0 & \cdots&0 &0&  \lambda-1 & 1 & 0 \\ 0 &  \cdots&0 &0& 1 & \lambda-2 & 1 \\ 1  & \cdots&1 &1&  0 & 1 & \lambda-(N-2) \\ \end{matrix}\right|\\
&=\left|\begin{matrix} \lambda-1 & 0 & \cdots &0 &0 &0 & 1 \\ 0 & \ddots & \ddots & \ddots &0&0 & 1  \\ \vdots & \ddots & \ddots & \ddots &\vdots  & \vdots\\ 0 &  \cdots&0 &  \lambda-1 & 0 & 0 & 1 \\ 0 & \cdots&0 &0&  \lambda-1 & 1 & 0 \\ 0 &  \cdots&0 &0& 0 & \lambda-2-\frac{1}{\lambda-1} & 1 \\ 0  & \cdots&0 &0&  0 & 0 & \lambda-(N-2)-\frac{N-3}{\lambda-1}-\frac{1}{\lambda-2-\frac{1}{\lambda-1}} \\ \end{matrix}\right|\\
&=\lambda(\lambda-1)^{N-4}(\lambda^3-(N+2)\lambda^2-(2-3N)\lambda-N).
\end{align*}
Notice that $p_{L(T_2)}(\Delta+1) <0$.  Therefore, $p_{L(T_2)}(\lambda)$ has a root larger than $\Delta+1$.
\end{proof}
Let $\Phi\backslash e$ be the gain graph obtained from $\Phi=(\Gamma,\phi)$ by deleting the edge $e$ from $\Gamma$.  The following lemma establishes a relationship between the eigenvalues of $L(\Phi)$ and $L(\Phi\backslash e)$.  This generalizes the signed and unsigned graphic versions found in \cite{MR1950410} and \cite[p. 187]{MR2571608}, respectively.  

\begin{lem}\label{gainLinterlaceEdges}  Let $\Phi = (\Gamma, \phi)$ be a $\mathbb{T}$-gain graph.  Then
\[ \lambda_{k+1}(L(\Phi)) \leq \lambda_{k}(L(\Phi\backslash e)) \leq \lambda_{k}(L(\Phi)), \text{ for all }k\in \{1,\ldots, n\}. \]
\end{lem}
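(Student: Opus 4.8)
The plan is to exhibit $L(\Phi\backslash e)$ as a rank-one downward perturbation of $L(\Phi)$ by using the incidence-matrix factorization of Lemma~\ref{LaplacianEtaEq}, and then to recover the interlacing through Cauchy interlacing (Lemma~\ref{interlacinglemma}). Fix an incidence matrix $\Eta:=\Eta(\Phi)\in\mathbb{C}^{n\times m}$ and let $\mathbf{h}_e$ denote its column indexed by $e$. Deleting $e$ from $\Gamma$ deletes exactly this column, and the remaining columns still satisfy the defining incidence relation, so $\Eta(\Phi\backslash e)$ is $\Eta$ with $\mathbf{h}_e$ removed and $L(\Phi\backslash e)=\Eta(\Phi\backslash e)\Eta(\Phi\backslash e)^{*}$ by Lemma~\ref{LaplacianEtaEq}. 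Expanding $L(\Phi)=\Eta\Eta^{*}$ columnwise gives
\[ L(\Phi)=\sum_{f\in E}\mathbf{h}_f\mathbf{h}_f^{*}=L(\Phi\backslash e)+\mathbf{h}_e\mathbf{h}_e^{*}, \]
so $L(\Phi)$ and $L(\Phi\backslash e)$ differ by the positive semidefinite rank-one matrix $\mathbf{h}_e\mathbf{h}_e^{*}$. The right-hand inequality $\lambda_k(L(\Phi\backslash e))\le\lambda_k(L(\Phi))$ is then immediate from Lemma~\ref{WeylINEQ}, since $\lambda_n(\mathbf{h}_e\mathbf{h}_e^{*})=0$.

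For the interlacing inequality $\lambda_{k+1}(L(\Phi))\le\lambda_k(L(\Phi\backslash e))$ I would pass to the edge-indexed Gram matrix $G:=\Eta^{*}\Eta\in\mathbb{C}^{m\times m}$. Since the $(f,g)$ entry of $G$ is the inner product of columns $f$ and $g$ of $\Eta$, deleting the row and column of $G$ indexed by $e$ produces precisely the principal submatrix $\Eta(\Phi\backslash e)^{*}\Eta(\Phi\backslash e)$. Applying Lemma~\ref{interlacinglemma} to the $m\times m$ matrix $G$ with $r=m-1$ (so that the index shift $n-r$ there equals $1$) gives, for every $k\in\{1,\dots,m-1\}$,
\[ \lambda_{k+1}(G)\le\lambda_k\!\big(\Eta(\Phi\backslash e)^{*}\Eta(\Phi\backslash e)\big)\le\lambda_k(G). \]

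It then remains to transport these inequalities back to the vertex-indexed Laplacians. The essential fact is that $\Eta\Eta^{*}$ and $\Eta^{*}\Eta$ share the same nonzero eigenvalues with multiplicities, as do $\Eta(\Phi\backslash e)\Eta(\Phi\backslash e)^{*}$ and $\Eta(\Phi\backslash e)^{*}\Eta(\Phi\backslash e)$; as all four are positive semidefinite, each ordered eigenvalue list is the common positive part padded by zeros, so $\lambda_k(L(\Phi))=\lambda_k(G)$ and $\lambda_k(L(\Phi\backslash e))=\lambda_k(\Eta(\Phi\backslash e)^{*}\Eta(\Phi\backslash e))$ for every index below the relevant matrix size. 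For the indices where all of these identifications are simultaneously valid, substituting them into the displayed interlacing yields the claim directly; the remaining large indices, where one matrix has already exhausted its positive eigenvalues, reduce to the trivial comparisons $0\le 0$ or $0\le\lambda_k(L(\Phi))$ via $\rk\Eta(\Phi\backslash e)\le\rk\Eta$. I expect this last step to be the main obstacle: the Gram matrices live in dimension $m$ and $m-1$ while the Laplacians live in dimension $n$, so the index shift supplied by Lemma~\ref{interlacinglemma} must be reconciled with the zero-padding through a short case split on whether $m$ exceeds $n$, and one must also adopt the convention that the left inequality is vacuous at $k=n$ (where $\lambda_{n+1}(L(\Phi))$ is read as $-\infty$).
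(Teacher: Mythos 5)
Your proof is correct and takes essentially the same route as the paper's: the paper likewise observes that $\Eta(\Phi\backslash e)^{*}\Eta(\Phi\backslash e)$ is a principal submatrix of $\Eta(\Phi)^{*}\Eta(\Phi)$, applies Lemma \ref{interlacinglemma}, and transports the conclusion via the equality of the nonzero spectra of $\Eta\Eta^{*}$ and $\Eta^{*}\Eta$. Your additional rank-one-plus-Weyl argument for the right-hand inequality and your explicit zero-padding case analysis are sound, but they only make explicit details the paper's terser proof leaves implicit.
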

\begin{proof}
The proof is identical to that of \cite[Lemma 3.7]{MR1950410}.
Notice that $\Eta(\Phi\backslash e)^{*}\Eta(\Phi\backslash e)$ is a principal submatrix of $\Eta(\Phi)^{*}\Eta(\Phi)$.  
Since the nonzero eigenvalues of $\Eta(\Phi)\Eta(\Phi)^{*}$ and $\Eta(\Phi)^{*}\Eta(\Phi)$ are the same, the result follows from Lemma \ref{interlacinglemma}.
\end{proof}

The following is a lower bound on the Laplacian spectral radius of a $\mathbb{T}$-gain graph, which depends only on the maximum degree.  This generalizes a signed graphic bound that appears in \cite{MR1950410}, which generalizes an unsigned graphic version in \cite[p.186]{MR2571608}.
\begin{thm}  Let $\Phi=(\Gamma,\phi)$ be a $\mathbb{T}$-gain graph.  Then
\[ \Delta +1 \leq \lambda_1(L(\Phi)). \]
Furthermore, when $\Gamma$ is connected, equality holds if and only if $\Delta=n-1$ and $\Phi$ is balanced. 
\end{thm}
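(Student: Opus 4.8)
The plan is to obtain the lower bound by comparison with a star subgraph via edge-deletion interlacing, and then to treat the equality case through a structural dichotomy built on the two trees of Lemma \ref{tree12lapsr}.

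For the bound itself, I would let $v$ be a vertex of maximum degree and let $\Phi_0$ be the spanning gain subgraph of $\Phi$ consisting only of the edges incident to $v$. Deleting the remaining edges one at a time and applying Lemma \ref{gainLinterlaceEdges} gives $\lambda_1(L(\Phi_0)) \le \lambda_1(L(\Phi))$. Now $\Phi_0$ is a star $K_{1,\Delta}$ (the tree $T_1$ with $N = \Delta + 1$) together with $n - 1 - \Delta$ isolated vertices. A forest has no cycles, so it is balanced, and by Lemma \ref{BalancedSpectrumL} the spectrum of $L(\Phi_0)$ agrees with that of the underlying unsigned forest; by Lemma \ref{tree12lapsr} its largest eigenvalue is $\lambda_1(L(T_1)) = \Delta + 1$. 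Hence $\Delta + 1 \le \lambda_1(L(\Phi))$.

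For the equality statement (with $\Gamma$ connected) I would argue the two directions separately. For the easy direction, suppose $\Delta = n-1$ and $\Phi$ is balanced. Then Lemma \ref{BalancedSpectrumL} reduces matters to the unsigned graph, and Lemma \ref{GGQuadraticForm} together with the identity $\sum_{i<j}|x_i - x_j|^2 = n\sum_i |x_i|^2 - |\sum_i x_i|^2$ shows $\mathbf{x}^* L(\Gamma)\mathbf{x} \le n$ for every unit vector (just drop the non-edge terms), so $\lambda_1(L(\Phi)) = \lambda_1(L(\Gamma)) \le n$; combined with the lower bound $\lambda_1 \ge \Delta + 1 = n$ this forces equality. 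For the converse I would argue contrapositively. First, if $\Delta \le n-2$ then connectedness yields a vertex $u$ at distance $2$ from a maximum-degree vertex $v$; the edges of the star at $v$ together with one pendant edge at the intermediate neighbour of $v$ form a copy of $T_2$ (with $N = \Delta + 2 \ge 4$, using that a connected graph with $\Delta<n-1$ has $\Delta\ge 2$). Restricting $\Phi$ to these edges and using Lemma \ref{gainLinterlaceEdges} and Lemma \ref{tree12lapsr} gives $\lambda_1(L(\Phi)) \ge \lambda_1(L(T_2)) > \Delta + 1$, so equality fails; hence equality forces $\Delta = n-1$.

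It remains to show, still assuming equality and now $\Delta = n-1$, that $\Phi$ is balanced. Since switching changes neither the Laplacian spectrum (Lemma \ref{simLSwitch}) nor balance, I may switch so that every edge at the universal vertex $v_1$ has gain $1$; write $\Phi_S$ for the resulting spanning star and $\Phi_R$ for the remaining non-star edges, so that $L(\Phi) = L(\Phi_S) + L(\Phi_R)$ with both summands positive semidefinite. Taking a unit eigenvector $\mathbf{x}$ of $L(\Phi)$ for $\lambda_1 = n$, the inequalities $\mathbf{x}^*L(\Phi_S)\mathbf{x} \le \lambda_1(L(\Phi_S)) = n$ and $\mathbf{x}^* L(\Phi_R)\mathbf{x} \ge 0$ must both be equalities. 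The first, since $n$ is a simple eigenvalue of the unsigned star $T_1$, pins $\mathbf{x}$ down to a scalar multiple of $(-(n-1),1,\ldots,1)$, so all non-center coordinates are equal and nonzero; the second, via Lemma \ref{GGQuadraticForm} applied to $\Phi_R$, gives $x_i = \phi(e_{ij})x_j$ for every non-star edge $e_{ij}$, whence $\phi(e_{ij}) = 1$. Thus all gains equal $1$ and $\Phi$ is balanced by Lemma \ref{HBGen}. The main obstacle is exactly this last step: establishing the rigidity that the extremal eigenvector must coincide with the star's (simple) eigenvector, and then the careful bookkeeping of which inequalities in the splitting $L(\Phi)=L(\Phi_S)+L(\Phi_R)$ are forced to be equalities.
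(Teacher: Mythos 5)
Your proof is correct, and while your lower bound and your forcing of $\Delta=n-1$ follow essentially the paper's own route (interlacing via Lemma \ref{gainLinterlaceEdges} against the balanced trees $T_1$ and $T_2$ of Lemma \ref{tree12lapsr}), your handling of the last implication --- that equality together with $\Delta=n-1$ forces balance --- is genuinely different. The paper argues locally and combinatorially: it switches the edges at the universal vertex $v$ to be neutral, computes the characteristic polynomial of the subgraph $G$ consisting of a single $3$-cycle through $v$ plus the pendant edges at $v$, evaluates it at $\lambda=n$ to conclude that every triangle through $v$ is neutral, and then appeals to the external fact (\cite[Corollary 3.2]{MR1007712}) that every cycle is a symmetric difference of triangles through $v$, so balance propagates to all of $\Phi$. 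You instead argue globally by eigenvector rigidity: after the same switching, you decompose $L(\Phi)=L(\Phi_S)+L(\Phi_R)$ into the spanning star and the remaining edges, note that for a unit $\lambda_1$-eigenvector both $\mathbf{x}^*L(\Phi_S)\mathbf{x}\le n$ and $\mathbf{x}^*L(\Phi_R)\mathbf{x}\ge 0$ must be tight, use simplicity of the star's top eigenvalue (visible in the paper's own factorization $p_{L(T_1)}(\lambda)=\lambda(\lambda-1)^{N-2}(\lambda-N)$) to pin $\mathbf{x}$ to a multiple of $(-(n-1),1,\ldots,1)$, and then read the gains of the non-star edges off Lemma \ref{GGQuadraticForm}. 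Your route buys self-containedness: no characteristic polynomial computation for $G$, no case split on whether $\Phi$ contains a cycle, and no appeal to the symmetric-difference-of-triangles result; the paper's route localizes the obstruction to individual triangles, which keeps the argument elementary and in the combinatorial spirit of its other proofs. Your direct Rayleigh-quotient verification of the easy direction, via the identity $\sum_{i<j}|x_i-x_j|^2=n\sum_i|x_i|^2-\bigl|\sum_i x_i\bigr|^2$, likewise replaces the paper's citation of the known unsigned bound, and your construction of the $T_2$ copy when $\Delta\le n-2$ (a vertex at distance two from a maximum-degree vertex, with $\Delta\ge 2$ guaranteed by connectedness) fills in a detail the paper leaves implicit.
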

\begin{proof}
The proof uses similar techniques as in \cite[Theorem 3.9]{MR1950410} and \cite[Theorem 3.10]{MR1950410}.  

There exists a subtree $T_1$ (see Figure \ref{treelaplemma}) of $\Phi$ with $N=\Delta+1$.  Since $T_1$ is a tree it is balanced.  From Lemmas \ref{tree12lapsr} and \ref{BalancedSpectrumL}, $\lambda_1(L(T_1))=\Delta+1$.  By repeated application of Lemma \ref{gainLinterlaceEdges}: $\lambda_1(L(\Phi)) \geq \lambda_1(L(T_1)) = \Delta+1.$

Suppose that $\Gamma$ is connected, $\Phi$ is balanced and $\Delta=n-1$.  Since $\Phi$ is balanced $\Phi \sim (\Gamma,1)$ by Lemma \ref{HBGen}, which means that $\lambda_1(L(\Phi))=\lambda_1(L(\Gamma,1))$ by Lemma \ref{simLSwitch}.  For an unsigned graph the result is well known, see \cite[p.186]{MR2571608} for example.  Therefore, $\Delta+1=\lambda_1(L(\Phi))$.

Suppose that $\Gamma$ is connected and $\Delta+1=\lambda_1(L(\Phi))$.  If $\Delta<n-1$, then there exists a subtree $T_2$ (see Figure \ref{treelaplemma}) of $\Phi$.  Since $T_2$ is a tree it is balanced.  From Lemmas \ref{tree12lapsr} and \ref{BalancedSpectrumL}, $\lambda_1(L(T_2))>\Delta+1$.  By repeated application of Lemma \ref{gainLinterlaceEdges}: $\lambda_1(L(\Phi))\geq\lambda_1(L(T_2))>\Delta+1$.  This contradicts the hypothesis that $\Delta+1=\lambda_1(L(\Phi))$.  Therefore, we may further assume that $\Delta=n-1$.

Let $v$ be a vertex with $\text{deg}(v)=n-1$.  Hence, if $\Phi$ has a cycle there must be a subgraph consisting of a single 3-cycle containing $v$ and $n-3$ pendant edges (an edge where one of its vertices has degree 1) incident to $v$.  Call this subgraph $G$.

We show that $G$ must be balanced.  The structure $G$ allows us to switch all edges incident to $v$ to be neutral.  The switched graph is denoted by $\hat{G}$. By Lemma \ref{simLSwitch}, the $\hat{G}$ and $G$ have the same characteristic polynomial.  Suppose the gain of the 3-cycle belonging to $G$ is $\mathfrak{g} \in \mathbb{T}$.  The characteristic polynomial of $L(G)$ is 
\[ p_{L(G)}(\lambda)=p_{L(\hat{G})}(\lambda)=(\lambda-1)^{n-3}[\lambda^3+\lambda^2(-n-3)+3n\lambda+(2\text{Re}(\mathfrak{g})-2)].\]
The characteristic polynomial evaluated at $\Delta+1=n$ is $p_{L(G)}(n)=(n-1)^{n-3}(2\text{Re}(\mathfrak{g})-2)$.  If $n\geq3$ and $\mathfrak{g} \neq 1$, then $p_{L(G)}(n)<0$.  Hence, $p_{L(G)}(\lambda)$ has a root larger than $\Delta+1$.  If $\mathfrak{g} \neq 1$, then by repeated application of Lemma \ref{gainLinterlaceEdges}: $\lambda_1(L(\Phi))\geq\lambda_1(L(G))>\Delta+1$.  This contradicts the hypothesis that $\Delta+1=\lambda_1(L(\Phi))$.  Therefore, we may further assume that $\mathfrak{g}=1$; that is, $G$ is balanced.  This means any triangle containing $v$ is balanced.

Every cycle in $\Phi$ is the symmetric difference of triangles containing $v$.  Therefore, every cycle in $\Phi$ is balanced.  For a proof of this inference of balance see \cite[Corollary 3.2]{MR1007712}.  
\end{proof}
Here we present a set of Laplacian eigenvalue bounds which actually depend on the gain function. 

\begin{thm}\label{thm1} Let $\Phi=(\Gamma,\phi)$ be a connected $\mathbb{T}$-gain graph.  Then
\begin{equation}\label{NeqLB1gain}
\lambda_{n}(L(\Phi)) \leq \frac{1}{n} \sum_{j=1}^n (d_j- d_j^{\text{net}})  \leq \lambda_{1}(L(\Phi)),
\end{equation}
\begin{equation}\label{NeqLB2gain}
\lambda_{n}(L(\Phi)) \leq \sqrt{\frac{1}{n}\sum_{j=1}^n (d_j- d_j^{\text{net}})^2} \leq \lambda_{1}(L(\Phi)),
\end{equation}
and
\begin{equation}\label{NeqLB3gain}
\lambda_{n}(L(\Phi)) \leq \sqrt[3]{\frac{1}{n}\sum_{j=1}^n d_j(d_j-d_j^{\text{net}})^2 - \frac{2}{n}\sum_{e_{ij}\in E} \text{Re}[\phi(e_{ij})](d_i-d_i^{\text{net}})(d_j-d_j^{\text{net}})} \leq \lambda_{1}(L(\Phi)).
\end{equation}
\end{thm}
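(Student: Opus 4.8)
The plan is to adapt the moment method of Theorem~\ref{AdjacencyBounds1} to the Laplacian. Write $L$ for $L(\Phi)$ and put $M_k := \mathbf{j}^{*} L^k \mathbf{j}$. The decisive structural fact is that $L$ is positive semidefinite: by Lemma~\ref{LaplacianEtaEq}, $L = \Eta(\Phi)\Eta(\Phi)^{*}$, so $\lambda_i(L)\ge 0$ for all $i$. Hence the eigenvalues of $L^k$ are precisely $\lambda_i(L)^k$, and since $t\mapsto t^k$ is increasing on $[0,\infty)$ we have $\min_i \lambda_i(L)^k = \lambda_n(L)^k$ and $\max_i \lambda_i(L)^k = \lambda_1(L)^k$. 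Applying Lemma~\ref{RRThm} to the Hermitian matrix $L^k$ with test vector $\mathbf{j}$ gives
\[ \lambda_n(L)^k \;\le\; \frac{M_k}{\mathbf{j}^{*}\mathbf{j}} \;=\; \frac{M_k}{n} \;\le\; \lambda_1(L)^k, \]
and because all three quantities are nonnegative I can take $k$-th roots to get $\lambda_n(L)\le (M_k/n)^{1/k}\le \lambda_1(L)$ for every $k$. This positive semidefiniteness is exactly what licenses the root extraction uniformly in $k$, so that the even case $k=2$ is handled on the same footing as $k=1,3$; the remaining task is to evaluate $M_1,M_2,M_3$.

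For the evaluations I would begin from $L\mathbf{j} = (D(\Gamma)-A(\Phi))\mathbf{j} = \mathbf{d}-\mathbf{d}^{\text{net}}$, using $D(\Gamma)\mathbf{j}=\mathbf{d}$ and $A(\Phi)\mathbf{j}=\mathbf{d}^{\text{net}}$ from \eqref{netdegeq}. Setting $\mathbf{w}:=L\mathbf{j}=\mathbf{d}-\mathbf{d}^{\text{net}}$ with $w_j=d_j-d_j^{\text{net}}$, I get $M_1=\mathbf{j}^{*}\mathbf{w}=\sum_j (d_j-d_j^{\text{net}})$, yielding \eqref{NeqLB1gain}. Since $L$ is Hermitian, $\mathbf{j}^{*}L=(L\mathbf{j})^{*}=\mathbf{w}^{*}$, so $M_2=\mathbf{w}^{*}\mathbf{w}=\sum_j|d_j-d_j^{\text{net}}|^2$, yielding \eqref{NeqLB2gain}. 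For the cubic moment I would write $M_3=\mathbf{j}^{*}L^3\mathbf{j}=\mathbf{w}^{*}L\mathbf{w}$ and expand the quadratic form through Lemma~\ref{GGQuadraticForm}, obtaining $\mathbf{w}^{*}L\mathbf{w}=\sum_{e_{ij}\in E}|w_i-\phi(e_{ij})w_j|^2 = \sum_i d_i|w_i|^2 - 2\sum_{e_{ij}\in E}\text{Re}\bigl(\overline{w_i}\,\phi(e_{ij})\,w_j\bigr)$, which is \eqref{NeqLB3gain}.

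Two points require care. Each $M_k$ must be real, since it is trapped between real eigenvalue powers: $M_2=\mathbf{w}^{*}\mathbf{w}$ and $M_3=\mathbf{w}^{*}L\mathbf{w}$ are real as a norm and a Hermitian quadratic form, while $M_1$ is real because pairing the oriented edges $e_{ij},e_{ji}$ gives $\sum_j d_j^{\text{net}}=2\sum_{e_{ij}\in E}\text{Re}[\phi(e_{ij})]$. Also, the displayed radicands in \eqref{NeqLB2gain} and \eqref{NeqLB3gain} are written with $(d_j-d_j^{\text{net}})^2$ and the product $\text{Re}[\phi(e_{ij})](d_i-d_i^{\text{net}})(d_j-d_j^{\text{net}})$; the conjugate-aware forms $|d_j-d_j^{\text{net}}|^2$ and $\text{Re}(\overline{w_i}\,\phi(e_{ij})\,w_j)$ produced above agree with these precisely when the net degrees are real, as for signed graphs, so for general complex gains the expressions should be read with these magnitudes and real parts.

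I expect the only real obstacle to be the bookkeeping in the $M_3$ step --- carrying the conjugates correctly and collapsing the double sum over oriented edges into a single sum of real parts over $E$ --- whereas the conceptual core, that positive semidefiniteness of $L$ permits simultaneous $k$-th root extraction, is immediate.
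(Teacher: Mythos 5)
Your proof is correct, and at the top level it is the paper's own moment method: trap the normalized moments $\mathbf{j}^{\text{T}}L(\Phi)^{k}\mathbf{j}/n$ between $\lambda_n(L(\Phi))^k$ and $\lambda_1(L(\Phi))^k$ via Lemma \ref{RRThm} and evaluate $k=1,2,3$. But your evaluation of the moments takes a genuinely different route, and it buys something. The paper expands $L^k=(D-A)^k$ into monomials in $D$ and $A$ (eight terms when $k=3$) and pairs each one against $\mathbf{j}$; you compute $\mathbf{w}:=L\mathbf{j}=\mathbf{d}-\mathbf{d}^{\text{net}}$ once and read off $M_2=\mathbf{w}^{*}\mathbf{w}$ and $M_3=\mathbf{w}^{*}L\mathbf{w}$, collapsing the cubic moment with the quadratic-form Lemma \ref{GGQuadraticForm}. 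Besides being shorter, your route keeps the conjugates straight, and that matters here: since $A(\Phi)$ is Hermitian but not symmetric, $\mathbf{j}^{\text{T}}A=(\overline{\mathbf{d}^{\text{net}}})^{\text{T}}$, whereas the paper's computations of $N_2$ and $N_3$ silently use $\mathbf{j}^{\text{T}}A=(\mathbf{d}^{\text{net}})^{\text{T}}$. For genuinely complex gains the $d_j^{\text{net}}$ need not be real, and the literal radicand $\sum_j(d_j-d_j^{\text{net}})^2$ in \eqref{NeqLB2gain} can even fail to be real --- for a triangle with gains $i$, $e^{i\pi/4}$, $1$ its imaginary part is exactly $-2$ --- so your conjugate-aware quantities $\sum_j|d_j-d_j^{\text{net}}|^2$ and $\sum_{e_{ij}\in E}\text{Re}\bigl(\overline{w_i}\,\phi(e_{ij})\,w_j\bigr)$ are the ones that genuinely lie between the eigenvalue powers; your instruction to read the theorem this way (the displayed form being correct verbatim when all net degrees are real, e.g.\ for signed graphs) is the right repair, not pedantry. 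Likewise, your explicit appeal to positive semidefiniteness --- so that $\lambda_1(L^2)=\lambda_1(L)^2$, $\lambda_n(L^2)=\lambda_n(L)^2$, and the $k$-th roots can be extracted --- supplies the justification the paper compresses into ``is clear''; it is exactly what the even case $k=2$ requires.
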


\begin{proof}
For brevity, we write $D$ for $D(\Phi)$ and $A$ for $A(\Phi)$.  Let $N_k={\bf j}^{\text{T}} L(\Phi)^k {\bf j}$.  From Lemma \ref{RRThm} the following is clear: 
\[ (\lambda_{n}(L(\Phi)))^k \leq N_k/\mathbf{j}^{\text{T}}\mathbf{j} \leq (\lambda_{1}(L(\Phi)))^k.  \]
We will compute $N_1$, $N_2$ and $N_3$; thus, making inequalities \eqref{NeqLB1gain}, \eqref{NeqLB2gain} and \eqref{NeqLB3gain} true. 
Now we compute $N_1$.
\begin{align*}
N_1 &= {\bf j}^{\text{T}} L(\Phi) {\bf j}={\bf j}^{\text{T}} (D-A) {\bf j} = {\bf j}^{\text{T}}(\mathbf{d}-\mathbf{d}^{\text{net}}) = \sum_{j=1}^n (d_j- d_j^{\text{net}}).
\end{align*}

Similarly we compute $N_2$. 
\begin{align*}
N_2 &= {\bf j}^{\text{T}} L(\Phi)^2 {\bf j}= {\bf j}^{\text{T}} (D^2-AD-DA+A^2) {\bf j} \\
&= {\bf j}^{\text{T}}(D\mathbf{d}-A\mathbf{d}-D\mathbf{d}^{\text{net}}+A\mathbf{d}^{\text{net}} \\
&= \mathbf{d}^{\text{T}}\mathbf{d}-(\mathbf{d}^{\text{net}})^{\text{T}}\mathbf{d}-\mathbf{d}^{\text{T}}\mathbf{d}^{\text{net}}+(\mathbf{d}^{\text{net}})^{\text{T}}\mathbf{d}^{\text{net}} \\
&=\sum_{j=1}^n d_j^2-2\sum_{j=1}^n d_j^{\text{net}}d_j + \sum_{j=1}^n (d_j^{\text{net}})^2.
\end{align*}
To compute $N_3$ we will consider the matrices formed in the expansion of $L(\Phi)^3$.  Using equation \eqref{netdegeq} it is easy to check that
\begin{equation*}
{\bf j}^{\text{T}} DAD {\bf j}= \mathbf{d}^{\text{T}}A\mathbf{d} = \sum_{i=1}^n d_i \sum_{j=1}^n \phi(e_{ij}) d_j = 2\sum_{e_{ij}\in E(\Phi)} \text{Re}[\phi(e_{ij})]d_i d_j,
\end{equation*}
\begin{equation*}
{\bf j}^{\text{T}} AD^2 {\bf j}= (\mathbf{d}^{\text{net}})^{\text{T}} D\mathbf{d} =(\mathbf{d}^{\text{net}})^{\text{T}} \mathbf{d}^{(2)} =\sum_{j=1}^n (d_j^{\text{net}})d_{j}^2,
\end{equation*}
\begin{equation*}
{\bf j}^{\text{T}} DA^2 {\bf j}=\mathbf{d}^{\text{T}}A\mathbf{d}^{\text{net}} =\sum_{i=1}^n d_i \sum_{j=1}^n \phi(e_{ij}) (d_j^{\text{net}})=\sum_{e_{ij}\in E(\Phi)} [\phi(e_{ij})d_j^{\text{net}}d_i+\overline{\phi(e_{ij})}d_i^{\text{net}}d_j],
\end{equation*}
\begin{equation*}
{\bf j}^{\text{T}} ADA {\bf j}=(\mathbf{d}^{\text{net}})^{\text{T}} D\mathbf{d}^{\text{net}} = \sum_{j=1}^n (d_j^{\text{net}})^2d_{j},  
\end{equation*}
\begin{equation*}
{\bf j}^{\text{T}} D^2A {\bf j}=\mathbf{d}^{\text{T}} D\mathbf{d}^{\text{net}} =\mathbf{d}^{\text{T}} D\mathbf{d}^{\text{net}} =(\mathbf{d}^{(2)})^{\text{T}} \mathbf{d}^{\text{net}} = \sum_{j=1}^n (d_j^{\text{net}})d_{j}^2= {\bf j}^{\text{T}} AD^2 {\bf j},
\end{equation*}
and
\begin{equation*}
{\bf j}^{\text{T}} A^2D {\bf j}=(\mathbf{d}^{\text{net}})^{\text{T}} A \mathbf{d}
=\sum_{i=1}^n d_i^{\text{net}} \sum_{j=1}^n \phi(e_{ij}) d_j=\sum_{e_{ij}\in E(\Phi)} [\overline{\phi(e_{ij})}d_j^{\text{net}}d_i+\phi(e_{ij})d_i^{\text{net}}d_j].
\end{equation*}
The calculation for ${\bf j}^{\text{T}} A^3 {\bf j}$ is done in the proof of Theorem \ref{AdjacencyBounds1}.  Now the following simplification of $N_3$ can be made.
\begin{align*}
N_3 &= {\bf j}^{\text{T}} L(\Phi)^3 {\bf j}= {\bf j}^{\text{T}}(D^3-AD^2-DAD+A^2D-D^2A+ADA+DA^2-A^3){\bf j}  \\
&= \sum_{j=1}^n d_j^3-2\sum_{j=1}^n d_j^{\text{net}}d_{j}^2 - 2\sum_{e_{ij}\in E(\Phi)} \text{Re}[\phi(e_{ij})]d_i d_j + \sum_{j=1}^n (d_j^{\text{net}})^2d_{j} \\
& \qquad + 2\sum_{e_{ij}\in E(\Phi)} \text{Re}[\phi(e_{ij})](d_j^{\text{net}}d_i+d_i^{\text{net}}d_j)- 2\sum_{e_{ij}\in E(\Phi)} \text{Re}[\phi(e_{ij})]d_i^{\text{net}}d_j^{\text{net}}\\
&= \sum_{j=1} d_j(d_j-d_j^{\text{net}})^2 - 2\sum_{e_{ij}\in E} \text{Re}[\phi(e_{ij})](d_i-d_i^{\text{net}})(d_j-d_j^{\text{net}}).\qedhere
\end{align*}
\end{proof}
Since every edge has two (not necessarily distinct) group elements associated to it, we define a subgraph induced by a pair of group elements, $\{\mathfrak{g},\mathfrak{g}^{-1}\}$. The set of \emph{inverse pairs} is $\mathfrak{U}:=\{\{\mathfrak{g},\mathfrak{g}^{-1}\}:\mathfrak{g}\in\mathfrak{W} \}$.  Let $S:=\{ e_{ij}\in E : \phi(e_{ij})=\mathfrak{g} \text{ or } \phi(e_{ij})=\mathfrak{g}^{-1} \}$.  The subgraph induced by the pair $\{\mathfrak{g},\mathfrak{g}^{-1}\}\in \mathfrak{U}$ is the gain graph $\Phi^{\{\mathfrak{g},\mathfrak{g}^{-1}\}}:=(V,S,\mathbb{T},\phi{\mid}_S)$.
  
The next theorem is a generalization of the signed and and unsigned graphic results in \cite{MR1950410} and \cite[p.194]{MR2571608}, respectively.

\begin{thm} Let $\Phi = (\Gamma,\phi)$ be a connected $\mathbb{T}$-gain graph.  Then
\[ \max_{\{\mathfrak{g},\mathfrak{g}^{-1}\}\in \mathfrak{U}}\lambda_{1}(L(\Gamma^{\{\mathfrak{g},\mathfrak{g}^{-1}\}})) \leq \lambda_{1}(L(\Phi)) \leq \sum_{\{\mathfrak{g},\mathfrak{g}^{-1}\}\in \mathfrak{U}} \lambda_{1}(L(\Gamma^{\{\mathfrak{g},\mathfrak{g}^{-1}\}})).\]
\end{thm}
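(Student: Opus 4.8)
The plan is to realize $L(\Phi)$ as a sum, indexed by the inverse pairs in $\mathfrak{U}$, of the Laplacians of the induced subgraphs $\Phi^{\{\mathfrak{g},\mathfrak{g}^{-1}\}}$, and then read off both inequalities from Weyl's inequality (Lemma \ref{WeylINEQ}) together with the fact that each such Laplacian is positive semidefinite.

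First I would observe that the inverse pairs in $\mathfrak{U}$ partition the edge set $E$: every edge $e_{ij}$ carries the gains $\phi(e_{ij})$ and $\phi(e_{ji})=\phi(e_{ij})^{-1}$, which form exactly one pair $\{\mathfrak{g},\mathfrak{g}^{-1}\}\in\mathfrak{U}$, and this assignment does not depend on the orientation chosen for the edge (the self-inverse case $\mathfrak{g}=\mathfrak{g}^{-1}$, i.e.\ $\mathfrak{g}=\pm1$, merely gives a singleton pair and causes no trouble). Since $\Phi^{\{\mathfrak{g},\mathfrak{g}^{-1}\}}$ retains all of $V$ and exactly the edges in the corresponding set $S$, each edge contributes to the degrees of its endpoints and to the relevant adjacency entry in precisely one of these subgraphs. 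Hence $D(\Gamma)=\sum_{\{\mathfrak{g},\mathfrak{g}^{-1}\}\in\mathfrak{U}} D(\Gamma^{\{\mathfrak{g},\mathfrak{g}^{-1}\}})$ and $A(\Phi)=\sum_{\{\mathfrak{g},\mathfrak{g}^{-1}\}\in\mathfrak{U}} A(\Phi^{\{\mathfrak{g},\mathfrak{g}^{-1}\}})$, so that
\[ L(\Phi)=\sum_{\{\mathfrak{g},\mathfrak{g}^{-1}\}\in\mathfrak{U}} L(\Phi^{\{\mathfrak{g},\mathfrak{g}^{-1}\}}). \]
This decomposition into Hermitian summands is the crux of the argument.

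For the upper bound I would apply Lemma \ref{WeylINEQ} with $k=1$ repeatedly (induction on $|\mathfrak{U}|$): for Hermitian $M$ and $M'$ one has $\lambda_1(M+M')\leq\lambda_1(M)+\lambda_1(M')$, and iterating this over the summands of the displayed decomposition yields $\lambda_1(L(\Phi))\leq\sum_{\{\mathfrak{g},\mathfrak{g}^{-1}\}\in\mathfrak{U}}\lambda_1(L(\Phi^{\{\mathfrak{g},\mathfrak{g}^{-1}\}}))$. For the lower bound, fix a pair $\{\mathfrak{g},\mathfrak{g}^{-1}\}$ and write $L(\Phi)=L(\Phi^{\{\mathfrak{g},\mathfrak{g}^{-1}\}})+R$, where $R$ is the sum of the remaining Laplacians. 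By Lemma \ref{LaplacianEtaEq} each summand has the form $\Eta\Eta^{*}$ and is therefore positive semidefinite, so $R$ is positive semidefinite and $\lambda_n(R)\geq0$. The left inequality in Lemma \ref{WeylINEQ} with $k=1$ then gives $\lambda_1(L(\Phi))\geq\lambda_1(L(\Phi^{\{\mathfrak{g},\mathfrak{g}^{-1}\}}))+\lambda_n(R)\geq\lambda_1(L(\Phi^{\{\mathfrak{g},\mathfrak{g}^{-1}\}}))$; taking the maximum over all pairs closes this direction.

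The individual steps are routine, so there is no serious analytic obstacle; the one place demanding care is the bookkeeping in the second paragraph, namely verifying that the partition of $E$ by inverse pairs is well-defined independently of each edge's orientation and that the degree and adjacency matrices split additively across the subgraphs. Once the decomposition $L(\Phi)=\sum L(\Phi^{\{\mathfrak{g},\mathfrak{g}^{-1}\}})$ is established, both bounds follow immediately from Weyl's inequality and positive semidefiniteness, and the connectedness hypothesis on $\Phi$ is in fact not needed for this particular estimate.
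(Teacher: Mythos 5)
Your proof is correct, and it differs from the paper's in a meaningful way on the lower bound. The paper, like you, obtains the upper bound by asserting the decomposition $L(\Phi)=\sum_{\{\mathfrak{g},\mathfrak{g}^{-1}\}\in \mathfrak{U}} L(\Gamma^{\{\mathfrak{g},\mathfrak{g}^{-1}\}})$ and applying Weyl's inequality (Lemma \ref{WeylINEQ}); you additionally spell out the bookkeeping (that inverse pairs partition $E$ independently of orientation, so $D$ and $A$ split additively), which the paper leaves as a ``notice that.'' For the lower bound, however, the paper instead deletes, one at a time, all edges not in the fixed class $S$ and applies the edge-deletion interlacing result (Lemma \ref{gainLinterlaceEdges}) repeatedly, so that $\lambda_1$ can only decrease; you stay inside the decomposition, writing $L(\Phi)=L(\Phi^{\{\mathfrak{g},\mathfrak{g}^{-1}\}})+R$ with $R$ a sum of Laplacians, each positive semidefinite by Lemma \ref{LaplacianEtaEq}, and apply the left-hand Weyl inequality with $\lambda_n(R)\geq 0$. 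Both routes are valid; yours is more uniform (a single tool, Weyl plus positive semidefiniteness, handles both directions) and avoids invoking the interlacing machinery, while the paper's interlacing route is the natural companion to its signed-graph antecedent and automatically yields the analogous statement for all eigenvalues $\lambda_k$, not just $\lambda_1$ --- though your argument also extends to general $k$ via Weyl's inequality with index $k$. Your closing remark that connectedness of $\Phi$ is never used is also correct: neither your argument nor the paper's requires it.
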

\begin{proof} The proof is inspired by \cite[Corollary 3.8]{MR1950410}.  The lower bound is immediate by repeated application of Corollary \ref{gainLinterlaceEdges}.  Also, notice that $L(\Phi)=\sum_{\{\mathfrak{g},\mathfrak{g}^{-1}\}\in \mathfrak{U}} L(\Gamma^{\{\mathfrak{g},\mathfrak{g}^{-1}\}})$.  Hence, from Lemma \ref{WeylINEQ} the right inequality is valid.
\end{proof}

\section{Examples - The cycle and path $\mathbb{T}$-gain graphs}

Here we calculate the adjacency and Laplacian eigenvalues of the cycle and path graphs with complex unit gains.  Let $C_n$ be a cycle on $n$ vertices.  The following calculation is a generalization of the unsigned and signed graph versions in \cite[p.3]{MR2571608} and \cite{Germina20112432}, respectively.
 
\begin{thm} Suppose $\Phi=(C_n,\mathbb{T},\phi)$ with $\phi(C_n)=\xi=e^{i\theta}$.  Then
\begin{equation}
\sigma(A(\Phi))=\left\{ 2\cos\left(\frac{\theta+2\pi j}{n}\right): j\in\{0,\ldots,n-1\} \right\},
\end{equation}
and
\begin{equation}
\sigma(L(\Phi))=\left\{ 2-2\cos\left(\frac{\theta+2\pi j}{n}\right) : j\in\{0,\ldots,n-1\} \right\}.
\end{equation}
\end{thm}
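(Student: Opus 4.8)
The plan is to exploit the switching invariance of the spectrum (Lemmas \ref{simASwitch} and \ref{simLSwitch}) to replace $\Phi$ by a convenient switching-equivalent representative, and then to diagonalize by hand. First I would delete the edge $v_n v_1$ to obtain the spanning path $v_1 v_2 \cdots v_n$, which is a tree and hence balanced, so by Lemma \ref{HBGen} it carries a potential function $\theta$. Using $\theta$ (extended arbitrarily to all of $V$) as a switching function neutralizes every path edge. Because the total gain around a closed walk is a switching invariant (the factors $\zeta(v_k)^{-1}\zeta(v_{k+1})$ telescope around the cycle), the one remaining edge $e_{n1}$ must then carry gain exactly $\phi(C_n)=\xi$. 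Thus I may assume $A:=A(\Phi)$ is the $n\times n$ Hermitian matrix with $a_{j,j+1}=a_{j+1,j}=1$ for $1\le j\le n-1$, with $a_{n1}=\xi$ and $a_{1n}=\bar{\xi}$, and zeros elsewhere.

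Next I would solve $A\mathbf{x}=\lambda\mathbf{x}$ directly using the ansatz $x_k=z^k$. The interior rows $2\le j\le n-1$ read $z^{j-1}+z^{j+1}=\lambda z^j$, forcing $\lambda=z+z^{-1}$. Substituting this relation into the two boundary rows (at $j=1$, giving $z^2+\bar{\xi}z^n=\lambda z$, and at $j=n$, giving $z^{n-1}+\xi z=\lambda z^n$) I expect both to collapse to the single condition $z^n=\xi$, using $|\xi|=1$. The roots are then $z_j=e^{i(\theta+2\pi j)/n}$ for $j\in\{0,\ldots,n-1\}$; these are $n$ distinct points on the unit circle, so the vectors $\mathbf{x}^{(j)}=(z_j,z_j^2,\ldots,z_j^n)$ are linearly independent by a Vandermonde argument and therefore exhaust the spectrum. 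The associated eigenvalues are $\lambda_j=z_j+z_j^{-1}=2\cos\!\left(\frac{\theta+2\pi j}{n}\right)$, which is the claimed adjacency spectrum. (Equivalently, one may switch to the fully symmetric representative in which every edge carries gain $e^{i\theta/n}$, so that $A=e^{i\theta/n}P+e^{-i\theta/n}P^{-1}$ for the cyclic shift $P$, and read off the eigenvalues from the Fourier diagonalization of $P$, namely $2\,\mathrm{Re}\!\left(e^{i\theta/n}e^{2\pi i j/n}\right)$.)

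Finally, since $C_n$ is $2$-regular we have $D(\Gamma)=2I$, whence $L(\Phi)=2I-A(\Phi)$. The Laplacian spectrum is therefore $\{\,2-\lambda:\lambda\in\sigma(A(\Phi))\,\}$, which immediately yields the stated formula for $\sigma(L(\Phi))$.

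I expect the main obstacle to be the bookkeeping in the reduction step: namely, justifying cleanly that neutralizing the path forces the last edge to carry precisely $\xi$ (this is exactly where the switching-invariance of the cycle gain is used), together with checking that both boundary equations genuinely reduce to the same condition $z^n=\xi$ rather than to two incompatible ones. Once those are settled, the Vandermonde independence count and the passage $L=2I-A$ are routine.
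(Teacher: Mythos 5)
Your proposal is correct and follows essentially the same route as the paper: switch so that only the edge $v_nv_1$ carries the nonneutral gain $\xi$, find the geometric-sequence eigenvectors (the paper packages this as $A=P+P^{*}$ for the $\xi$-twisted cyclic shift $P$, which is exactly the alternative you mention parenthetically), and obtain the Laplacian spectrum from $L(\Phi)=2I-A(\Phi)$ by $2$-regularity. Your direct ansatz $x_k=z^k$ with the Vandermonde independence count is just a more hands-on rendering of the paper's diagonalization of the unitary matrix $P$, so the two arguments coincide in substance.
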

\begin{proof} This proof is only a slight modification of the version for an unsigned graph in \cite[p.3]{MR2571608}.  Switch the cycle so that at most one edge is nonneutral, say $\phi(v_nv_1)=\xi$. Let
\[
 P = \begin{bmatrix}
       0 & 1 & 0 &  \cdots & 0\\
       0 & 0 & 1 &  \cdots & 0\\
       \vdots & \vdots & \ddots & \ddots & \vdots \\
       0 & 0 & 0 &  \cdots & 1\\
       \xi & 0 & 0 & \cdots & 0\\
     \end{bmatrix}.
\]
It is obvious that $PP^*=I$, so $P^*=P^{-1}$.  Let $\mathbf{x}=(x_1,\ldots,x_n)\in\mathbb{C}^n$ be an eigenvector of $P$ with corresponding eigenvalue $\lambda(P)$.  From the equation $P\mathbf{x}=\lambda(P)\mathbf{x}$ it is clear that $x_i=\lambda(P)x_{i-1}$ for $i\in \{2,\ldots,n\}$, and $\xi x_1=\lambda(P)x_n$.  Therefore, $\lambda(P)^n-\xi=0$, and so $\sigma(P)=\{ \xi^{1/n}e^{2 \pi i j/n} : j\in\{0,1,\ldots,n-1\} \}$.  Notice that $A=P+P^*=P+P^{-1}$.  Thus, $A$ has eigenvalues of the form $\lambda(A)=\lambda(P)+\lambda(P)^{-1}=2\text{ Re}[\xi^{1/n}e^{2 \pi i j/n}]=2\cos(\frac{\theta+2\pi j}{n})$.  

Since $\Phi$ is 2-regular we can write $L(\Phi)=2I-A(\Phi)$.  Hence, if $\mathbf{x}$ is an eigenvector of $A(\Phi)$ with associated eigenvalue $\lambda$, then $L(\Phi)\mathbf{x}=(2I-A(\Phi))\mathbf{x}=(2-\lambda)\mathbf{x}$.  The result follows. 
\end{proof}

  Let $P_n$ be a path graph with $n$ vertices.  Since any $\mathbb{T}$-gain graph $\Phi=(P_n,\phi)$ is balanced, $\Phi$ and $P_n$ have the same spectrum by Lemma \ref{BalancedSpectrumL}.  The eigenvalues of $P_n$ can be found in \cite[p.47]{MR2571608}.

\section{ Acknowledgements}
The author would like to thank Thomas Zaslavsky for his valuable comments and suggestions regarding this work.


\bibliographystyle{amsplain}
\bibliography{mybib}

\end{document}